\documentclass[twoside,11pt]{amsart} 
\usepackage{enumitem}
\usepackage{epsfig}
\usepackage{latexsym} 
\usepackage{amsmath} \usepackage{amssymb}
\usepackage{placeins}   % for FloatBarrier command

 \keywords{ primes, gaps, prime constellations, Eratosthenes sieve}

\subjclass{11N05, 11A41, 11A07}

\newtheorem{theorem}{Theorem}[section]
\newtheorem{lemma}[theorem]{Lemma}

\newtheorem{conjecture}[theorem]{Conjecture}

\newdimen\epsfxsize
\newdimen\epsfysize

\newcommand {\gap}     {\makebox[0.075 in]{}}   
\newcommand {\biggap}     {\makebox[0.2 in]{}}   
\newcommand {\st}      {\gap : \gap}   
\newcommand{\lil}   {\scriptstyle }

\newcommand {\fto}     {\longrightarrow}

\newcommand {\set}[1]  {\left\{ {#1} \right\}}   
  
\newcommand {\pml}[1]  {{#1}^{\#}}

\newcommand{\Z}     {{\mathbb Z}}

\newcommand{\pgap}   {{\mathcal G}}

\begin{document}

\title[Surviving Eratosthenes sieve I:  quadratic density]{Surviving Eratosthenes sieve I: \\  quadratic density and Legendre's conjecture}

\date{21 Mar 2026}

%\institution{U of Washington}
\author{Fred B. Holt}
\address{fbholt62@gmail.com; www.primegaps.info; github.com/fbholt/Primegaps-v2}

\begin{abstract}
We have been studying Eratosthenes sieve as a discrete dynamic system, obtaining exact models for the relative populations
for small gaps (currently gaps $g \le 82$) in the cycle of gaps $\pgap(p^\#)$ at each stage of the sieve.
The gaps in the interval $\Delta H(p_k)=[p_k^2, p_{k+1}^2]$ are fixed in $\pgap(p^\#)$ and survive all subsequent stages of t
he sieve to be confirmed as gaps between primes.  

We have shown that samples of gaps between primes over these intervals of survival $\Delta H(p_k)$ have population distributions
that reflect the relative population models $w_g(p_k^\#)$.

This paper advances our study of the estimates of survival across stages of the sieve.  Inspired by Legendre's conjecture, 
we introduce the concept of quadratic density $\eta_s(p_k)$,
which is the expected population of the constellation $s$ in the intervals $[n^2, (n+1)^2]$ for $p_k \le n < p_{k+1}$.
We show that once a gap occurs in $\pgap(p^\#)$, its expected quadratic density increases across all subsequent stages of
the sieve.

Regarding Legendre's conjecture, beyond postulating one prime in the interval $[n^2,(n+1)^2]$, the quadratic density predicts 
the populations of several prime gaps within this interval.
\end{abstract}

\maketitle

\section*{Introduction}
By studying Eratosthenes sieve as a discrete dynamic system, we have previously \cite{FBHSFU, FBHPatterns} identified exact models of the
populations of small gaps and constellations within the cycles of gaps $\pgap(\pml{p})$ across all stages of the sieve.  
Currently we have these models $w_g(p_k^\#)$ for all gaps $g \le 82$ and $w_{g,J}(p_k^\#)$ for any admissible constellations of span 
$|s| \le 62$.

Here we explore in more detail how the populations of gaps and constellations in $\pgap(\pml{p})$
are reflected in the counts of gaps and constellations among prime numbers.  

Our working hypothesis is that the gaps between primes in the interval $\Delta H(p_k)=[p_k^2, p_{k+1}^2]$ is an 
approximately uniform sample from the
relative populations $w_g(p_k^\#)$ in the cycle of gaps $\pgap(p_k^\#)$.  

\begin{figure}[hbt]
\centering
\includegraphics[width=5.25in]{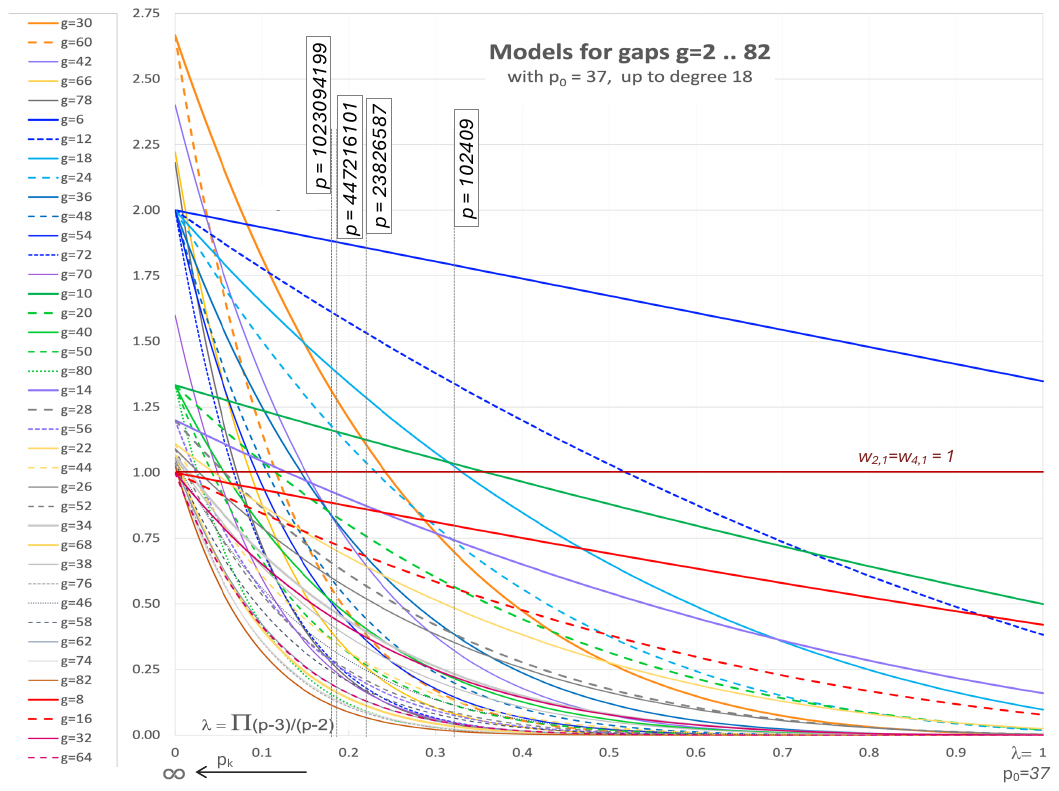}
\caption{\label{AllGapsFig} We have previously established the exact models for the populations $n_s(\pml{p_k})$ and relative populations $w_s(\pml{p_k})$
in the cycle of gaps $\pgap(\pml{p_k})$, for all constellations $s$ with span $|s| \le 2p_1$.  
Shown here are the relative population models $w_{g,1}(\pml{p_k})$ for $p_0=37$ and all gaps $g \le 82$.
The graphs start at the right, where $p_0=37$ and the system parameter $\lambda=1$.  As the sieve continues, $\lambda \fto 0$
as $p \fto \infty$, and we follow the graphs to the left.  As the sieve proceeds through enormous primes,
the graphs converge toward their asymptotic values at $\lambda=0$. }
\end{figure}

Samples of the gaps between primes over these intervals $\Delta H(p_k)$ consistently reflect the distributions indicated by the
relative population models $w_g(p_k^\#)$, to first order.  A selection of these sampled $\Delta H(p_k)$ are shown in 
Figure~\ref{DelHFig}.
 
\begin{figure}[hbt]
\centering
\includegraphics[width=5in]{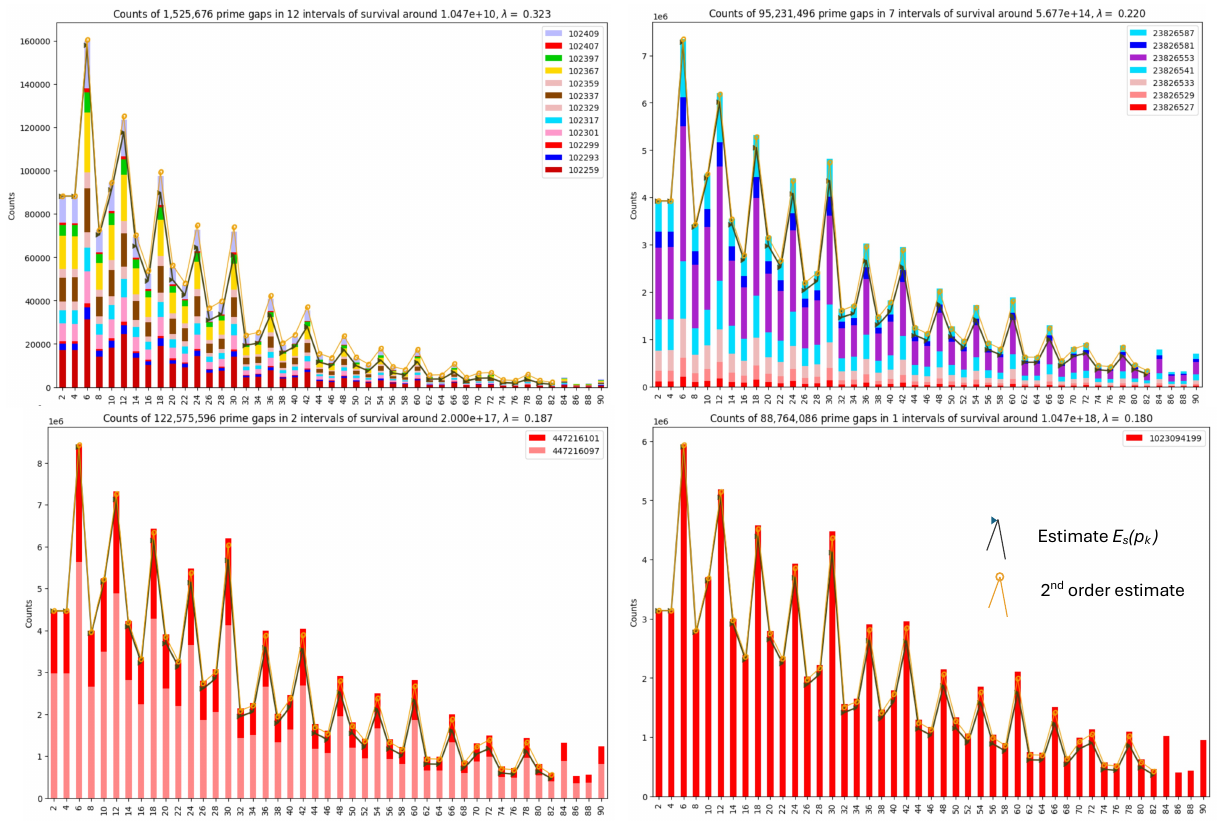}
\caption{\label{DelHFig} Shown here are samples of the populations of gaps within intervals of survival $\Delta H(p)$
for four different values of $\lambda$.  The first-order estimates based on $w_g(\lambda)$ and a $2^{\rm nd}$-order correction are
represented by the piecewise lines. The samples from the individual $\Delta H(p)$ are stacked and color-coded by
the gap ${g=p_{k+1}-p_k}$.}
\end{figure}

The span of $\Delta H(p_k)$ is proportional to the gap $g=p_{k+1}-p_k$.
$$|\Delta H(p_k)| = p_{k+1}^2-p_k^2=g(2 p_k+g)$$
and under our hypothesis of approximate uniformity we would expect to see this proportionality reflected in the populations of 
the gaps in this interval.
We do observe this proportionality across the bands within each column of figures like Figure~\ref{DelHFig}.

In this paper we make this make this measurement of proportionality more precise.
We define $\eta_s(p_k)$ to be the {\em quadratic density} of the constellation $s$ in the interval of survival $\Delta H(p_k)$.
$$ \eta_s(p_k) \; = \; \frac{1}{g} N_s([p_k^2, p_{k+1}^2]) \; \approx \; n_{s,J}(p_k^\#) \cdot \frac{2p_k+g}{p_k^\#}$$
The quadratic density $\eta_s(p_k)$ is the expected average occurrence of the constellation $s$ in the
$g$ quadratic intervals $[n^2, (n+1)^2]$ for $n=p_k, \ldots, p_{k+1}-1$, under the assumption of approximate uniformity
of the distribution of gaps in $\pgap(p_k^\#)$.

Figure~\ref{EtaSampleFig} shows the values of $\eta_g(p)$ around the first two samples in Figure~\ref{DelHFig}.
The first panel of Figure~\ref{EtaSampleFig} shows $\eta_g(p)$ for $506$ consecutive intervals of survival $\Delta H(p)$
for the primes $97897 \le p \le 103841$.  The distributions of the samples $\eta_g(p)$ are plotted, for the gaps $g$
for $2 \le g \le 94$, and we superimpose the mean population for this gap and two standard deviations both up and down.
The samples support our hypothesis of approximate uniformity.

Our study of quadratic densities $\eta_s$ of constellations $s$ across the intervals of survival $\Delta H(p)$ supports
Legendre's conjecture, which is one of Landau's problems from 1912.

\noindent {\bf Legendre's Conjecture (1808).} For every $n \ge 2$, there exists at least one prime number in the interval
$[n^2, (n+1)^2]$.

We show that for gaps, once a gap $g$ appears in $\pgap(p^\#)$, the expected value of its quadratic density $\eta_g$ increases
for all $q > p$.  For $n \ge 5$ not only do we expect one prime to occur in $[n^2,(n+1)^2]$, we have expected populations of
the prime gaps in these intervals, and we expect the populations of the gaps 
in the previous $\Delta H(p)$ to increase.  This is our Theorem~\ref{gapThm} below.

\begin{figure}[hbt]
\centering
\includegraphics[width=5in]{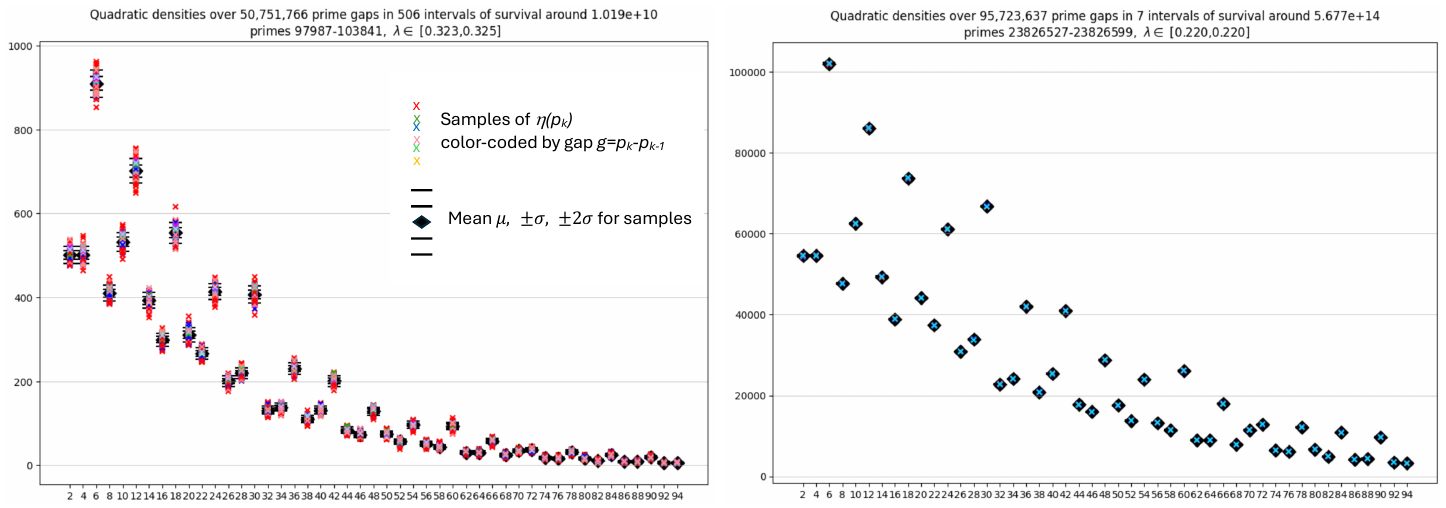}
\caption{\label{EtaSampleFig} These graphs are statistical summaries of samples of quadratic density $\eta$ corresponding to the
first two panels in Figure~\ref{DelHFig}. Shown are the samples of the quadratic densities themselves, as individual markers.  A black
diamond marks the mean for the sample, and the short lines mark one and two standard deviations of the sample.}
\end{figure}

These results add a constructive complement to Hardy \& Littlewood's \cite{HL} analytic estimates based on their circle method.
The prime number theorem provides a broad statistical confirmation that Legendre's Conjecture is true ``on average'' or true for almost
all $n$.  In 1923 Hardy \& Littlewood were able to produce estimates, e.g. their Conjecture~B, for the number of pairs of primes that 
differ by a given span $g$.  Brent \cite{BrentSmall} makes estimates for the gaps $g$ by applying inclusion-exclusion to Hardy
\& Littlewood's estimate.

Our approach breaks the occurrences of a span $g$ between pairs of $p$-rough numbers into the populations of the gap $g$ itself and its
driving terms of various lengths.  With exact population counts in the cycles $\pgap(p^\#)$, 
our hypothesis of approximate uniformity provides estimates for survival
that strongly align with the estimates by Hardy \& Littlewood and by Brent.  Rather than applying the circle method and then inclusion-exclusion, 
we have used the recursive construction of the cycles $\pgap(p^\#)$ to obtain exact population models for gaps and short admissible 
constellations, and we then estimate the number of occurrences in the interval of survival $\Delta H(p)$.

The quadratic density strikes a meaningful balance between the competing effects of the raw population of a gap growing by factors of
$(p-2)$ through the cycles of gaps $\pgap(p^\#)$, and the total number of gaps growing by $(p-1)$.  The population of a gap $g$ is growing
in the cycle of gaps $\pgap(p^\#)$ but its overall density is diminishing.

The interval of survival $\Delta H(p_k)$ provides manageable estimates, but samples using $\Delta H(p_k)$ directly have a lot of 
variation due to the factor $g_3=p_{k+1}-p_k$.  The quadratic density $\eta(p_k)$ neutralizes this factor, and its intuitive meaning is the average
population of gaps across the intervals $[n^2, (n+1)^2]$ for the $g_3$ such intervals in $\Delta H(p_k)$.

% XXXQHERE 26 Jan -- balance between two effects. population of gap g growing by factors of (p-2)
%  but total population of gaps is \phi(p#) which grows by factors of (p-1).  In the large every gap g goes to 0% of
% the total population.  In constant proportion to other gaps.
% The horizon of survival grows as g(2p+g).  The factor (2p+g) balances this tension but the factor g is erratic.

Jupyter notebooks of Python code and data files for this project are available at \textsf{https://github.com/fbholt/Primegaps-v2} .

% ========  SECTION:  setting, evolution =====================
\section{Evolution of populations of constellations in $\pgap(\pml{p})$.}
This section is a summary of results in \cite{FBHSFU, FBHPatterns, FBHktuple} about the exact models for 
the populations $n_s(\pml{p_k})$ and 
relative populations $w_s(\pml{p_k})$ of admissible constellations in the cycles of gaps $\pgap(\pml{p_k})$,
across stages of Eratosthenes sieve.  
In Section~\ref{EstSection} we use the relative population models to estimate
the distributions of gaps that we expect to survive the sieve.  In Section~\ref{etaSection} we examine the quadratic
density $\eta_s(p)$ of a constellation $s$ in $\Delta H(p)$, establishing some analytic results.

The data samples reflect the estimates to first order.  
That is, over the range that we have been able to compute, the gaps between the primes
 reflect the average expected survival of gaps, given the relative populations of gaps at the corresponding stage of Eratosthenes sieve.

In \cite{FBHSFU, FBHPatterns, FBHktuple} we study Eratosthenes sieve as a discrete dynamic system in order to study
the gaps and constellations among the prime numbers.

A sequence $s$ of consecutive gaps is a {\em constellation}.  The number of gaps in a constellation $s$ is its {\em length}, and
the sum of the gaps in $s$ is its {\em span}, denoted $|s|$.  A single gap is a constellation of length $J=1$.

At the stage of Eratosthenes sieve in which we have eliminated multiples of the prime $p$,
the remaining numbers are the $p$-rough numbers, the unit $1$ and those numbers all of whose divisors are greater than $p$.
The $p$-rough numbers consist of the generators of $\Z \bmod \pml{p}$ and their offsets by $\pml{p}$.
There is a cycle of gaps $\pgap(\pml{p})$ of length $\phi(\pml{p})$ and span $\pml{p}$ among the $p$-rough numbers.  

We identified \cite{FBH07} a three-step recursion that takes $\pgap(\pml{p_k})$ as input
and produces the next cycle $\pgap(\pml{p_{k+1}})$.  
Starting with $\pgap(\pml{p_k}) = g_1 g_2 \ldots g_{\phi(\pml{p_k})}$, the recursive construction is:
\begin{itemize}
\item[R1.]  identify the next prime $p_{k+1} = g_1 +1$.
\item[R2.]  Concatenate $p_{k+1}$ copies of $\pgap(\pml{p_k})$.
\item[R3.]  {\em Fusions.} Add the adjacent gaps ({\em fuse}) $g_1+g_2$ and then at the running sums indicated by the elementwise product
 $p_{k+1} \ast \pgap(\pml{p_k})$.
\end{itemize}
The result of this three-step recursion is the cycle $\pgap(\pml{p_{k+1}})$.

The first fusion in step R3, $g_1+g_2$, reflects the confirmation of $p_{k+1}$ as the next prime.
The subsequent $\phi(\pml{p_k})-1$ fusions correspond to removing the multiples of $p_{k+1}$.  When a candidate prime is
eliminated, we combine the gaps on either side of this former candidate.  These additions of adjacent gaps are the fusions in step R3.

The cycles of gaps under this recursion $\pgap(\pml{p_k}) \fto \pgap(\pml{p_{k+1}})$ form a discrete dynamic system.  In steps R2 and R3,
a lot of the structure of $\pgap(\pml{p_k})$ is preserved, and the fusions in R3 are applied methodically.  
$$ \pgap(\pml{p_0}) \fto \pgap(\pml{p_1}) \fto \cdots \fto \pgap(\pml{p_k}) \fto \pgap(\pml{p_{k+1}}) \fto \cdots $$

By the elementwise product in step R3 from $\pgap(\pml{p_k}) \fto \pgap(\pml{p_{k+1}})$,
we know that the minimum span between fusions is $2 p_{k+1}$.  So provided that
$|s| < 2p_{k+1}$, the possible fusions in $s$ all occur in separate images of $s$.  

If we take initial conditions from the cycle of gaps $\pgap(\pml{p_0})$, then we can derive exact models for the
populations $n_{s,j}(\pml{p})$ of driving terms of length $j$ and span $g=|s|$ in the cycle $\pgap(\pml{p})$ for all primes $p \ge p_0$
and all admissible constellations $s$ with span $|s| \le 2p_1$.  
See~\cite{FBHktuple}.
  
\begin{figure}[hbt]
\centering
\includegraphics[width=3.75in]{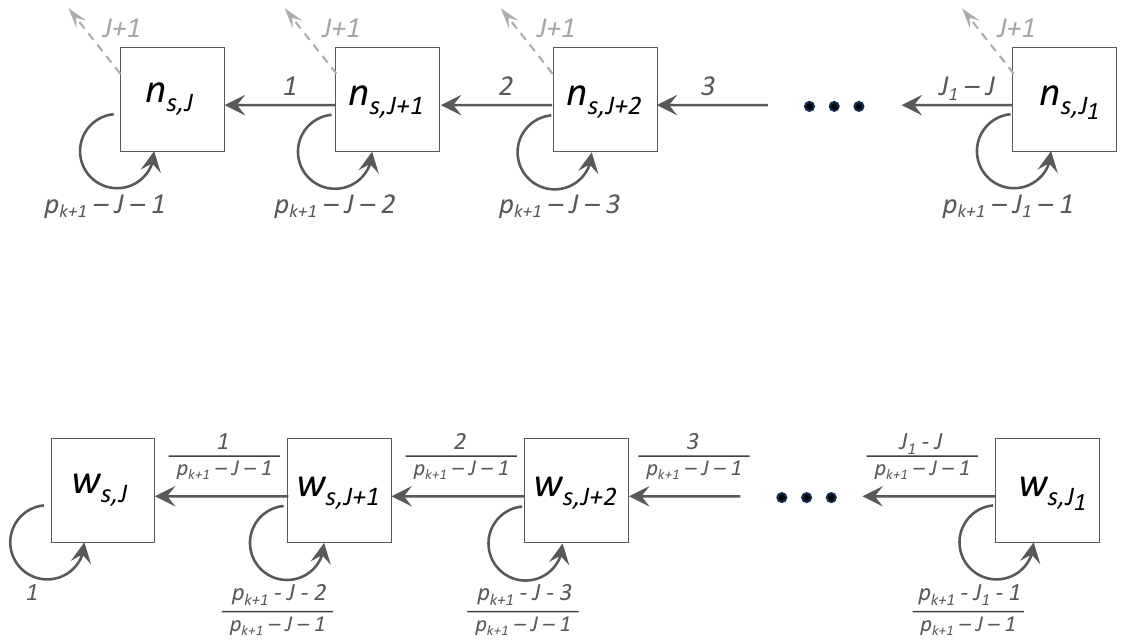}
\caption{\label{DynnFig} The model for the population of a constellation $s$ of length $J$ when the span $|s| < 2p_1$, 
illustrated as a Markov chain across its driving terms. }
\end{figure}

Figure~\ref{DynnFig} illustrates the population model.  For each driving term $\tilde{s}$ for $s$ of length $j$ in $\pgap(\pml{p_k})$,
of the $p_{k+1}$ images $\tilde{s}$ created in step R2, each of the $J+1$ boundary fusions eliminates an image as a driving term for $s$,
each of the $j-J$ interior fusions creates a driving term of length $j-1$, and remaining $p_{k+1}-j-1$ images survive intact.

For each length $J$ the populations $n_s(\pml{p_k})$ of every constellation $s$ of length $J$ and its driving terms are subject 
to the same transfer matrix
$$ n_s(\pml{p_{k+1}}) \; = \; \tilde{M}_{J:J_{\max}}(p_{k+1}) \cdot n_s(\pml{p_k}).$$
$n_{s,J}(\pml{p})$ denotes the population of the constellation $s$ itself in the cycle $\pgap(\pml{p})$.

The bound $2 p_1$ encourages us to use
as large a $p_0$ as we can manage.  In our work we use $p_0=37$ for gaps, so we can derive the exact population models for all gaps $g \le 82$.
Currently we use $p_0=29$ for admissible constellations, providing exact population models for any admissible constellations $s$ of span $|s|\le 62$.
New approaches to the enumerations \cite{Brown} of constellations within these large cycles may enable us to increase this starting point to 
$p_0 =41$ or $p_0 = 43$.

The populations $n_{s,j}(\pml{p})$ are all superexponential, dominated by the factor $\prod_{1}^{k}(p_i - j - 1)$.  To facilitate 
comparisons among the gaps, we derive the {\em relative population models} $w_{s,j}(\pml{p})$ 
$$ w_{s,j}(\pml{p_k}) \; = \; n_{g,j}(\pml{p_k}) / \prod_{p> J+1}^{p_k} (p - J-1) $$
where $J$ is the length of the constellation $s$.
At each stage of the sieve the relative population $w_{s,J}(\pml{p_k})$ represents the superexponential population $n_{s,J}(\pml{p_k})$
as a coefficient on $\prod_{p>J+1}^{p_k}(p - J-1)$.   These models are derived in \cite{FBHSFU, FBHPatterns, FBHktuple}.

\begin{figure}[hbt]
\centering
\includegraphics[width=3.75in]{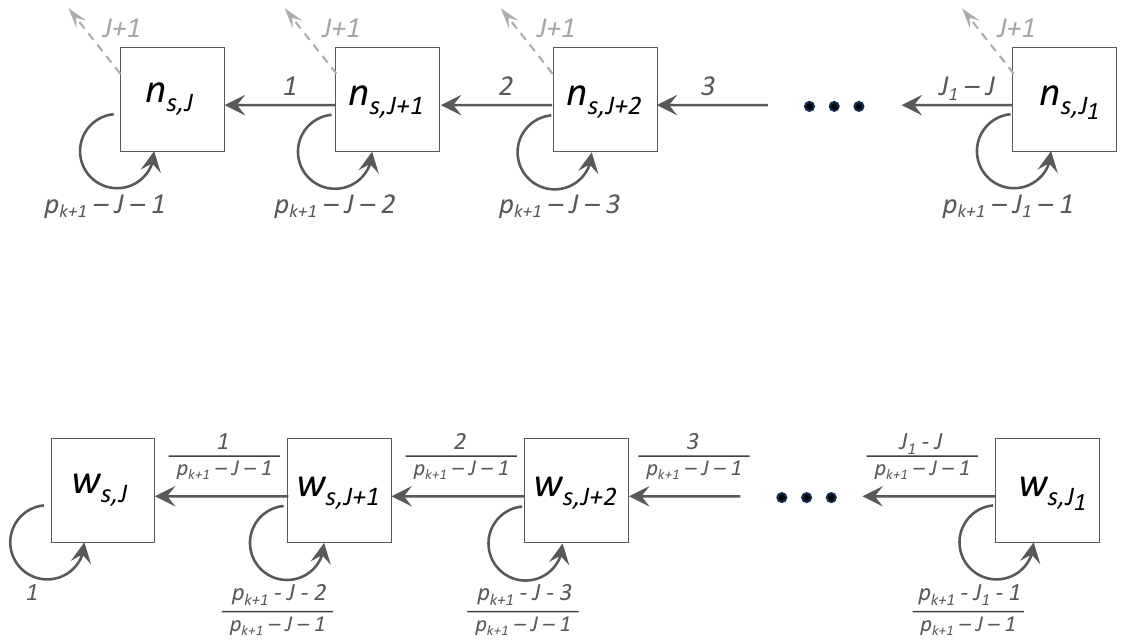}
\caption{\label{DynwFig} The model for the relative population $w_s(p_k^\#)$ 
of a constellation $s$ of length $J$ and span $|s| < 2p_1$, illustrated as a Markov chain. }
\end{figure}

The relative population models are linear dynamic systems 
$$ w_s(p_k^\#) \; = \; M^k w_s(p_0^\#).$$
The matrix $M^k$ is diagonalizable with spectrum
\begin{eqnarray*}
 \Lambda(M^k) & = & \left\{1, \; \lambda=\lambda_2, \;  \lambda_3, \ldots, \, \lambda_{J_1} \right\} \\
  {\rm where} \; \lambda_j  & = & \prod_{p_1}^{p_k} \frac{q-J-j}{q-J-1}
\end{eqnarray*}

We use the subdominant eigenvalue $\lambda = \lambda_2$ as the model parameter.  For the initial cycle $\pgap(p_0^\#)$
we have $\lambda(p_0)=1$, and as
$p_k \fto \infty$ the parameter $\lambda(p_k) \fto 0$.

% define the models w(lambda)
We have previously shown \cite{FBHSFU, FBHPatterns, FBHktuple} that the relative population
$w_{s,J}(\pml{p_k})$ of a constellation $s$ of length $J$ and span $|s|<2p_1$ is given by
\begin{eqnarray*}
w_{s,J} (\pml{p_k}) & = & w_{s,J}(\infty)  -  l_2 \cdot \lambda({\lil p_k})  +  l_3 \cdot  \lambda_3({\lil p_k})  - \cdots \\
 & & \biggap \cdots \biggap     + (-1)^{J_1-J}  l_{\lil J_1-J+1} \cdot \lambda_{\lil J_1-J+1}({\lil p_k}) \\
 & \approx & w_{s,J}(\infty) \; - \; l_2 \cdot \lambda \; + \; l_3 \cdot \lambda^2 \; - \cdots + \; (-1)^{J_1-J}  l_{J_1-J+1} \cdot \lambda^{J_1-J} 
\end{eqnarray*}
in which $J_1$ is the length of the longest admissible driving term for $s$,  
\begin{eqnarray*}
\lambda_j & = & \prod_{p_1}^{p_k}\frac{q-J-j}{q-J-1} \approx \lambda^{j-1} \\
 & {\rm and} & \\
l_j & = & L_j^T \cdot w_s(\pml{p_0})
\end{eqnarray*}
for left eigenvector $L_j^T$ and initial populations $w_{s,j}(p_0^\#)$.

The models for the relative populations $w_{g,1}(\pml{p_k})$ for gaps $g \le 82$ are displayed in Figure~\ref{AllGapsFig}.
These models start at the righthand side, with $p_0=37$ and the system parameter $\lambda=1$.  As the sieve continues, the models
evolve to the left, toward their asymptotic values at $\lambda=0$.

The convergence to the asymptotic values $w_{g,1}(\infty)$ is very slow.  We can use Merten's Third Theorem to estimate correspondences between
large values of $p_k$ and small values of $\lambda$.  To reduce $\lambda$ in half, say from $\lambda=c$ to $\lambda=\frac{c}{2}$, we need to 
square the value of the corresponding prime, from $p_k=q$ to $p_k \approx q^2$.

In $\pgap(\pml{37})$, as seen on the righthand side of Figure~\ref{AllGapsFig}, the relative populations of the gaps are ordered primarily by size.
The gap $g=6$, represented by the solid blue line, has already jumped out to be the most populous gap in $\pgap(\pml{p_k})$, and it will continue to 
be the most populous gap until $g=30$ passes it up, when $\lambda \approx 0.083$, which corresponds to $p_k \approx 3.386 E19$.

For any constellation $s$ of length $J$, its asymptotic value $w_{s,J}(\infty)$ is determined solely by the prime factors that divide a span between
boundary fusions in $s$.  Let $Q$ be the product of odd primes that divide a span between boundary fusions in $s$, then
\begin{equation}
w_{s,J}(\infty)  =  \prod_{p | Q} \frac{p-\nu_s(p)}{(p-J-1)_+} \label{EqWInf}
\end{equation}
Here $\nu_s(p)$ is the number of residue classes $\bmod p$ covered by an instance of $s$, and $(n)_+= {\rm max}\set{1, n}$.
For gaps, the length $J=1$ and this becomes
$$
 w_{g,1}(\infty)  =  \prod_{{\rm odd}\; q | g} \frac{q-1}{q-2}. 
$$
 
 These asymptotic values for the relative populations are consistent with the probabilistic predictions of Hardy \& Littlewood \cite{HL, BrentSmall}.
On the lefthand side of Figure~\ref{AllGapsFig} we see the gaps sort out into families that share the same odd prime factors $Q$.  
We have color-coded the curves by these families with the same $Q$.

These models are useful in the following ways.  
For every constellation $s$ we can calculate its asymptotic population $w_{s,J}(\infty)$, as given by
Equation~\ref{EqWInf}.
To determine the other coefficients $l_j$, we need the initial conditions (the populations of $g$ and all of its driving terms) for $g$ in some
cycle $\pgap(\pml{p_0})$ for which $g \le 2p_1$.  For gaps within this bound $g \le 2p_1$, 
we can determine the complete models for their relative populations, and
we can use these models to show us the evolution, as in Figure~\ref{AllGapsFig}, far beyond what we could ever enumerate directly.

To put the power of these models in perspective, the cycle $\pgap(\pml{61})$ with $1.54E22$ gaps lies at the horizon of current 
global computing capacity.
The cycle $\pgap(\pml{199})$ has more gaps than there are atoms in the observable universe.  Yet we will work below with results from
$\pgap(\pml{31259})$ and even longer cycles.

From the dynamics of the recursion we make the following conjecture.

\begin{conjecture} \label{UniConj} For large enough primes $p_k$, the populations of the gaps within $\pgap(\pml{p_k})$ are approximately 
uniformly distributed.
\end{conjecture}

Under this conjecture we estimate survival for the gaps in the cycle $\pgap(p^\#)$ as the gaps between primes in the intervals
$\Delta H(p_k) = [p_k^2, p_{k+1}^2]$.

\FloatBarrier

% ====== SECTION:  Models for surviving the sieve  ============================
\section{Estimating survival through the sieve}\label{EstSection}
In this section we develop and analyze an estimate $E_s(p_k)$ of how many instances of a constellation $s$ survive the stage of
Eratosthenes sieve for $p_k$, to be confirmed as constellations among prime numbers.

In $\pgap(\pml{p_k})$ the smallest composite number remaining is $p_{k+1}^2$.  Let ${\gamma_i = p_{k+1}^2}$ be the 
generator in $\pgap(\pml{p_k})$, 
and all the gaps from $g_2$ through $g_{i-1}$ will be confirmed as gaps between primes.  These gaps survive the sieve.  We call this range
up through $p_{k+1}^2$ the {\em horizon of survival} $H(p_k)$ for $\pgap(\pml{p_k})$.

However, in $H(p_k)$ the gaps within the previous horizon $H(p_{k-1})$ have already been confirmed as gaps between primes.  The interval 
$$ \Delta H(p_k) \; = H(p_k) \setminus H(p_{k-1}) \; = \; [p_k^2, \; p_{k+1}^2]$$
is the {\em interval of survival}, and this interval $\Delta H(p_k)$ is the unique interval of gaps in $\pgap(\pml{p_k})$ that 
are newly confirmed as gaps between primes at this stage of the sieve.

We would expect the relative populations in $\Delta H(p_k)$ to closely reflect the
relative populations $w(\pml{p_k})$.  Figure~\ref{DelHFig} shows the aggregated populations of gaps over some intervals of survival around
four different values of $\lambda$.
$$
\begin{array}{rrrrr}
\lambda = & 0.180 & 0.187 & 0.220 & 0.323 \\
{\rm for} \; p_k \sim & 1023094199 & 447216101 & 23826581 & 102409 
\end{array}
$$

To the extent that the distribution of constellations in the cycles $\pgap(\pml{p})$ is a uniform distribution, we expect the populations
of gaps in the interval of survival $\Delta H(p)$ to reflect the relative population models $w(\lambda)$ for the value of $\lambda$ corresponding to
$p$.  These would be the sections of $w(\lambda)$  in Figure~\ref{AllGapsFig}.  We observe that these expectations hold, to first order.

The gaps in $\pgap(\pml{p_k})$ are not random.  The broader conjecture above provides a foundation for our estimates of surviving 
Eratosthenes sieve.  For our estimates a narrower conjecture would suffice.

\begin{conjecture}\label{UniHConj}
For large enough primes, the populations of the gaps within $\Delta H(p_k)$ are approximately 
uniformly sampled from their populations in $\pgap(\pml{p_k})$.
\end{conjecture}

% ====== SECTION:  Quadratic density  ============================
\section{The quadratic density of constellations}\label{etaSection}

In this section we establish conditions under which the expected number of surviving constellations increases from one stage of the
sieve to the next. 

We first have to specify what meaningful measure is increasing.  For any constellation $s$ of length $J$, the populations $n_{s,J}(\pml{p})$
and $w_{s,J}(\pml{p})$ are non-decreasing.  However, the population $n_{s,J}(\pml{p})$ grows by factors of $(p-J-1)$ while the overall length
of $\pgap(\pml{p})$ grows by factors of $(p-1)$.  So the density of $s$ in $\pgap(\pml{p})$ decreases eventually to $0$.  On the other hand, the
horizon of survival $H(p)$ grows as $p^2$.

Focusing on the interval of survival $\Delta H(p_k)$, we introduce the expected number of surviving constellations $s$ of length $J$ at the
stage of Eratosthenes sieve corresponding to the prime $p_k$.  We denote this expected value as $E_s(p_k)$.  Assuming a uniform distribution
of instances of $s$ in $\pgap(\pml{p_k})$ we have
\begin{eqnarray*}
 E_s(p_k) & = &  n_{s,J}(\pml{p_k}) \cdot \frac{p_{k+1}^2-p_k^2}{\pml{p_k}} \\
   & = & n_{s,J}(p_k^\#) \cdot \frac{g \cdot (2 p_k+ g)}{\pml{p_k}}  \\
   & = & w_{s,J}(p_k^\#) \cdot \prod_{2}^{p_k}\frac{(q-J-1)_{+}}{q} \cdot g \cdot (2p_k+g)
\end{eqnarray*}
in which $g$ is the gap from $p_k$ to $p_{k+1}$:  $g= p_{k+1}-p_k$.

Over any interval of primes, these estimates $E_s(p_k)$ will vary primarily by the gaps $g$ that separate the successive primes in that interval.

The estimates are proportional to the values of $g$, and the variations in the gaps $g$ across a constellation among primes will produce
proportionate variations in the estimates $E_s(p_k)$.  These variations are color-coded in the counts displayed in Figure~\ref{DelHFig}.
Bright red marks a gap $g=2$ from $p_k$ from $p_{k+1}$, rose marks a gap $4$, bright blue marks gaps of $6$, sky blue marks gaps
of $12$.  Gold marks the samples for $\Delta H(p_k)$ with a gap of $30$ from $p_k$ to $p_{k+1}$.

In the second chart in Figure~\ref{DelHFig} for example, we are enumerating the gaps between primes in the intervals of survival
$\Delta H(p)$ as $p$ runs through the $7$ consecutive primes from $p_k=23826527$ to $p_k=23826587$.  For our uniform estimates
$E_g(p_k)$ for gaps, the populations $n_g(p^\#)$ and total span $p^\#$ introduce a factor roughly $\frac{p-2}{p}$.  Across these $7$ samples
this factor is very close to $1$.  For these $7$ consecutive primes the factor $(2p_k+g)\sim 2p$ to first order. 

In contrast, over these $7$ consecutive primes $p_k$, the factor $g=p_{k+1}-p_k$ fluctuates from $2$ through $4$, $8$, $12$, $28$, 
$6$, and another $12$.  The fluctuations in this factor $g$ are the dominant variation in the uniform estimate $E_s(p_k)$.

\vskip 0.1in

To adjust for this variation by the gap $g= p_{k+1}-p_k$, we define the {\em quadratic density} $\eta_s(p_k)$ to be a normalized version of
$E_s(p_k)$. 
$$ \eta_s(p_k) \; = \; E_s(p_k) / g. $$
$\eta_s(p_k)$ provides the average expected number of instances of $s$ over the $g$ quadratic intervals $[n^2, (n+1)^2]$ within
$\Delta H(p_k) = [p_k^2, \;  p_{k+1}^2]$.
$$ \left[p_k^2, \; (p_k+1)^2\right], \;  \left[(p_k+1)^2, \; (p_k+2)^2\right], \; \ldots \; ,\left[(p_k + g-1)^2, \; (p_k+g)^2\right].$$
We call $\eta_s(p_k)$ the quadratic density because it provides the expected number of instances of $s$ over a unit quadratic
interval $[n^2, (n+1)^2]$, when $p_k$ is large.

The quadratic density $\eta$ provides a connection between these studies and Legendre's conjecture, that there is
always a prime number between $n^2$ and $(n+1)^2$.  The Prime Number Theorem already provides an expected number of primes in
this interval.  $\eta$ provides a profile of the expected populations of various gaps and constellations in this interval.

In Lemma~\ref{LemE} below, we identify the conditions under which $\eta_s(p_k)$ is non-decreasing.

\vskip 0.1in

We first note that the populations and relative populations of any sufficiently small constellation are nondecreasing.

\begin{lemma}\label{nwLem}
Let $s$ be a constellation in $\pgap(\pml{p_k})$ of length $J < p_{k+1}-2$ and span $|s| < 2p_{k+1}$.  Then
\begin{eqnarray*}
n_{s,J}(\pml{p_{k+1}}) & \ge & n_{s,J}(\pml{p_k}) \\
w_{s,J}(\pml{p_{k+1}}) & \ge & w_{s,J}(\pml{p_k}) 
\end{eqnarray*}
\end{lemma}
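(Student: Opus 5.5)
We would prove Lemma~\ref{nwLem} directly from the recursion R1--R3 and the Markov-chain picture of Figure~\ref{DynnFig}, tracking only the instances of $s$ itself (driving terms of length $j=J$). The plan has three steps.

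\textbf{Step 1: instances that survive intact.} Fix an instance of $s$ in $\pgap(\pml{p_k})$. In step R2 it is copied into $p_{k+1}$ images in the concatenation. Because the fusions in step R3 are separated by spans of at least $2p_{k+1}$, and $|s| < 2p_{k+1}$, the fusions that touch the images of this instance occur in distinct images. There are $J+1$ boundary positions of $s$ (the generators at its two ends and the $J-1$ interior generators). Each of these positions is fused in exactly one image, since the multiples of $p_{k+1}$ hit each residue class exactly once among the $p_{k+1}$ copies. So at most $J+1$ images are destroyed as copies of $s$, and at least $p_{k+1}-J-1$ images survive as instances of $s$.

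\textbf{Step 2: new instances from longer driving terms.} Driving terms of length $j>J$ can only contribute additional instances of $s$ through their interior fusions. These contributions are nonnegative. Together with Step 1 this gives
$$ n_{s,J}(\pml{p_{k+1}}) \;\ge\; (p_{k+1}-J-1)\, n_{s,J}(\pml{p_k}). $$
This is just the $(J,J)$ diagonal entry of $\tilde{M}_{J:J_{\max}}(p_{k+1})$, all of whose entries are nonnegative.

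\textbf{Step 3: conclusion.} The hypothesis $J < p_{k+1}-2$ gives $p_{k+1}-J-1 \ge 2 > 1$, and the first inequality follows. For the relative populations, divide by $\prod_{p>J+1}^{p_{k+1}}(p-J-1)$. This product equals the previous normalizing product times $(p_{k+1}-J-1)$, because $p_{k+1} > J+1$, so the new factor genuinely enters the product. We obtain
$$ w_{s,J}(\pml{p_{k+1}}) \;\ge\; w_{s,J}(\pml{p_k}) \;+\; \frac{\text{(contributions from longer driving terms)}}{\prod_{p>J+1}^{p_{k+1}}(p-J-1)} \;\ge\; w_{s,J}(\pml{p_k}). $$
Equivalently, $M$ has a $1$ in the $(J,J)$ entry and nonnegative entries elsewhere in that row.

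\textbf{Main obstacle.} The delicate point is Step 1: justifying that the $J+1$ boundary fusions fall in \emph{distinct} images and that each is counted exactly once. This needs the span bound $|s|<2p_{k+1}$ together with the fact that $p_{k+1}\ast\pgap(\pml{p_k})$ visits each position of the copied cycle exactly once. Special care is needed for the initial fusion $g_1+g_2$ and for instances of $s$ that wrap around the end of the cycle. We would handle these by treating $\pgap(\pml{p_k})$ cyclically and noting that the fusion $g_1+g_2$ is exactly the removal of the multiple $p_{k+1}\cdot 1$.
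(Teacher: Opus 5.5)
Your proof is correct and takes essentially the paper's route. The paper quotes the exact one-step recursion $n_{s,J}(\pml{p_{k+1}}) = (p_{k+1}-J-1)\, n_{s,J}(\pml{p_k}) + n_{s,J+1}(\pml{p_k})$ from the population model and subtracts, while you derive the diagonal term directly from R2--R3, drop the nonnegative driving-term contribution, and divide by the normalizing product exactly as the paper does. The distinctness of the fused images is not actually an obstacle for your one-sided bound: each of the $J+1$ generators of an instance is removed in exactly one of the $p_{k+1}$ copies, so at most $J+1$ copies are damaged whether or not $|s| < 2p_{k+1}$, and the span hypothesis is needed only to make the paper's recursion an exact equality.
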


\begin{proof}
For $n_{s,J}(\pml{p})$,
\begin{eqnarray*}
n_{s,J}(\pml{p_{k+1}}) - n_{s,J}(\pml{p_k}) & = & (p_{k+1}-J-1) n_{s,J}(\pml{p_k}) + n_{s,J+1}(\pml{p_k}) - n_{s,J}(\pml{p_k}) \\
   & = & (p_{k+1}-J-2) n_{s,J}(\pml{p_k}) + n_{s,J+1}(\pml{p_k})  
\end{eqnarray*}
When $p_{k+1} > J+2$, this sum is always non-negative, and it is positive once either $n_{s,J}(\pml{p_k}) > 0$ or $n_{s,J+1}(\pml{p_k}) > 0$.

For $w_{s,J}(\pml{p})$,
\begin{eqnarray*}
w_{s,J}(\pml{p_{k+1}}) - w_{s,J}(\pml{p_k}) & = &  w_{s,J}(\pml{p_k}) + \frac{\lil p_{k+1}-J-2}{\lil p_{k+1}-J-1} \cdot w_{s,J+1}(\pml{p_k}) - w_{s,J}(\pml{p_k}) \\
   & = & \frac{\lil p_{k+1}-J-2}{\lil p_{k+1}-J-1} \cdot w_{g,J+1}(\pml{p_k})  
\end{eqnarray*}
When $p_{k+1} > J+2$, this sum is always non-negative, and it is positive once $w_{s,J+1}(\pml{p_k}) > 0$.
\end{proof}

The dynamics of Eratosthenes sieve are fair.  No admissible constellation is removed or diminished.  In fact \cite{FBHPatterns, FBHktuple}, the
population of every admissible constellation of length $J$ grows by the same factor $(p-J-1)$.

The above proof shows that for any constellation $s$ of length $J$, if $s$ has driving terms of length $J+1$, then $w_{s,J}(\pml{p})$ is increasing.
If $s$ does not have any driving terms of length $J+1$, then $w_{s,J}(\pml{p})$ is constant.  This establishes that the graphs of any
$w_{s,J}(\lambda)$ are constant or convex.  

In Figure~\ref{AllGapsFig} we plot the curves for the relative populations of the gaps $g \le 82$.
Since we take our initial conditions from $\pgap(\pml{37})$, we are only able to determine the complete models for constellations up to this bound.
Lemma~\ref{nwLem} assures us that the graphs of the $w_{s,J}(\lambda)$ will have similar shapes to those depicted in Figure~\ref{AllGapsFig}.

Figures~\ref{ModelsJ2Fig} and~\ref{ModelsJ3Fig} show the relative population models for a selection of constellations of length $J=2$ and $J=3$
respectively.

% == subsection - eta ========================
\subsection{In the large, quadratic density $\eta$ is non-decreasing.}
We are ready to establish the conditions under which $\eta_s(p_k)$ is non-decreasing.
We consider the ratio $\eta_s(p_k) / \eta_s(p_{k-1})$ and determine conditions under which this ratio is at least $1$.

To set this up, we start in $\pgap(\pml{p_{k-2}})$.  As illustrated in Figure~\ref{GpkFig}, in $\pgap(\pml{p_{k-2}})$ the first gap
$g_1 = p_{k-1}-1$.  The second gap $g_2 = p_k - p_{k-1}$, and the third gap $g_3 = p_{k+1}-p_k$.

\begin{figure}[hbt]
\centering
\includegraphics[width=2.25in]{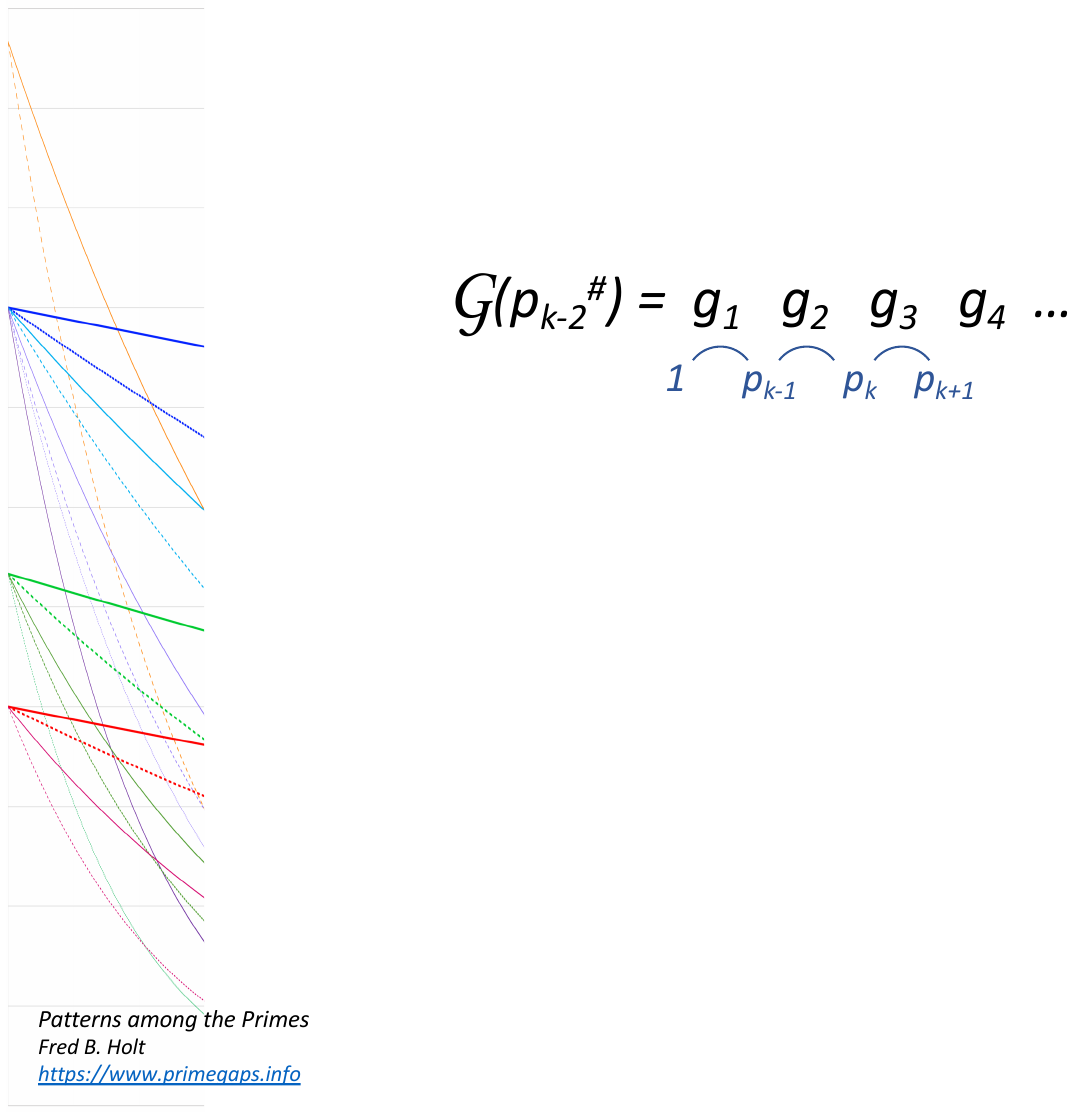}
\caption{\label{GpkFig} To compare the estimates of survival from the intervals of survival $\Delta H(p_{k-1})$ and $\Delta H(p_{k})$, 
we look at the first few gaps in the cycle $\pgap(\pml{p_{k-2}})$.}
\end{figure}

% LEMMA on eta non-decreasing
\begin{lemma}\label{LemE}
Let $s$ be a constellation of length $J$ such that the span $|s| < 2p_1$ and $n_{s,J}(\pml{p_0}) > 0$.
For $k \ge 2$ let ${g_2 = p_k - p_{k-1}}$ and ${g_3 = p_{k+1}-p_k}$.

\noindent The ratio $\frac{\eta_s(p_k)}{\eta_s(p_{k-1})} \ge 1$ iff
\begin{equation}
g_2 + g_3 \; \ge \; (J+1 - \delta_s) \cdot (2 + \frac{g_3}{p_k})  \label{EqGEnu}
\end{equation}
where $\delta_s = \frac{n_{s,J+1}(\pml{p_{k-1}})}{n_{s,J}(\pml{p_{k-1}})}$.
\end{lemma}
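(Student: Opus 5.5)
The plan is to compute the ratio $\eta_s(p_k)/\eta_s(p_{k-1})$ explicitly from the definition $\eta_s(p) = E_s(p)/g$ and the one-step population recursion. Then we reduce the inequality ``ratio $\ge 1$'' to (\ref{EqGEnu}) by elementary algebra, with every step reversible so that we get an ``iff''.

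First we write out both densities. At stage $p_k$ the relevant gap is $g_3 = p_{k+1}-p_k$, so
$$\eta_s(p_k) = n_{s,J}(\pml{p_k}) \cdot \frac{2p_k+g_3}{\pml{p_k}}.$$
At stage $p_{k-1}$ the relevant gap is $g_2 = p_k - p_{k-1}$, so
$$\eta_s(p_{k-1}) = n_{s,J}(\pml{p_{k-1}}) \cdot \frac{2p_{k-1}+g_2}{\pml{p_{k-1}}}.$$
We will use two simplifications: $2p_{k-1}+g_2 = 2p_k - g_2$, and $\pml{p_k} = p_k \cdot \pml{p_{k-1}}$. Before dividing, we need $n_{s,J}(\pml{p_{k-1}}) > 0$. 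This follows from the hypothesis $n_{s,J}(\pml{p_0})>0$ together with Lemma~\ref{nwLem}, applied inductively. Lemma~\ref{nwLem} applies because $|s| < 2p_1 \le 2p_{j+1}$ at every stage, and because the admissibility/length condition $J < p_{j+1}-2$ holds for constellations with span below $2p_1$. In particular $\delta_s$ is well defined.

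Next we invoke the population recursion used in the proof of Lemma~\ref{nwLem}, now for the step $\pml{p_{k-1}} \fto \pml{p_k}$. Since $|s| < 2p_1 \le 2p_k$, fusions in distinct images are separate, so
$$n_{s,J}(\pml{p_k}) = (p_k - J - 1)\, n_{s,J}(\pml{p_{k-1}}) + n_{s,J+1}(\pml{p_{k-1}}) = (p_k - J - 1 + \delta_s)\, n_{s,J}(\pml{p_{k-1}}).$$
Substituting this gives
$$\frac{\eta_s(p_k)}{\eta_s(p_{k-1})} = \frac{(p_k - (J+1-\delta_s))(2p_k+g_3)}{p_k\,(2p_k - g_2)}.$$
The denominator is positive, since $2p_k - g_2 = 2p_{k-1}+g_2 > 0$. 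The ratio is therefore $\ge 1$ iff
$$(p_k - (J+1-\delta_s))(2p_k+g_3) \ge p_k(2p_k-g_2).$$
Expanding, the $2p_k^2$ terms cancel, leaving
$$p_k(g_2+g_3) \ge (J+1-\delta_s)(2p_k+g_3).$$
Dividing by $p_k>0$ yields exactly (\ref{EqGEnu}).

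The algebra is routine. The only real obstacle is justifying that the exact one-step recursion for $n_{s,J}$ holds at the stage $p_{k-1}\fto p_k$. This requires that the $J+1$ boundary fusions and the interior fusions fall in distinct images of each driving term, which is where $|s|<2p_1$ is essential. It also requires that $n_{s,J}(\pml{p_{k-1}})>0$, so that $\delta_s$ makes sense and division is legitimate; this is where the hypothesis $n_{s,J}(\pml{p_0})>0$ and Lemma~\ref{nwLem} enter. I would state both points explicitly before performing the computation.
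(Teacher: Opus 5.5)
Your proof is correct and follows the paper's argument. You express the ratio using $\pml{p_k}=p_k\pml{p_{k-1}}$ and $2p_{k-1}+g_2=p_k+p_{k-1}$, substitute the one-step recursion $n_{s,J}(\pml{p_k})=(p_k-J-1+\delta_s)\,n_{s,J}(\pml{p_{k-1}})$, and clear the positive denominator to reach (\ref{EqGEnu}). Your justification that $n_{s,J}(\pml{p_{k-1}})>0$ goes beyond the paper, which divides without comment. Strictly, though, it only needs the factor $p_{j+1}-J-1\ge 1$ in the recursion, not the hypothesis $J<p_{j+1}-2$ of Lemma~\ref{nwLem}. That hypothesis can fail at the first step, for example with $s=242$, $p_0=3$, $p_1=5$.
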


\begin{proof}
\begin{eqnarray*}
\frac{\eta_s(p_k)}{\eta_s(p_{k-1})} & = & \frac{E_s(p_{k})/ g_3}{E_s(p_{k-1})/g_2} \; = \; \frac{n_{s,J}(\pml{p_k})}{n_{s,J}(\pml{p_{k-1}})} \cdot \frac{1}{p_k}
  \cdot \frac{g_3 (p_{k+1}+p_k)/g_3}{g_2 ( p_k+p_{k-1})/g_2} \\
  & = &  \frac{1}{p_k} \cdot \frac{ (p_{k+1}+p_k)}{( p_k+p_{k-1})} \cdot \frac{n_{s,J}(\pml{p_k})}{n_{s,J}(\pml{p_{k-1}})} \\
  & = & \frac{1}{p_k} \cdot \frac{ (p_{k+1}+p_k)}{( p_k+p_{k-1})} \cdot \frac{(p_k-J-1) n_{s,J}(\pml{p_{k-1}})+n_{s,J+1}(\pml{p_{k-1}})}{n_{s,J}(\pml{p_{k-1}})}  \\
  & = & \frac{1}{p_k} \cdot \frac{ (p_{k+1}+p_k)}{( p_k+p_{k-1})} \cdot \left[ (p_k-J-1) + \frac{n_{s,J+1}(\pml{p_{k-1}})}{n_{s,J}(\pml{p_{k-1}})} \right] \\
  & = & \frac{1}{p_k} \cdot \frac{ (p_{k+1}+p_k)}{( p_k+p_{k-1})} \cdot \left[ (p_k-J-1) + \delta_s \right] 
\end{eqnarray*}
Thus $\frac{\eta_s(p_k)}{\eta_s(p_{k-1})} \ge 1$ iff
\begin{eqnarray*}
\frac{1}{p_k} \cdot \frac{ (p_{k+1}+p_k)}{( p_k+p_{k-1})} \cdot \left[ (p_k-J-1) + \delta_s \right] & \ge & 1 \\
\frac{ (p_{k+1}+p_k)}{( p_k+p_{k-1})} - 1 & \ge & \frac{1}{p_k} \cdot \frac{ (p_{k+1}+p_k)}{( p_k+p_{k-1})} \cdot \left[ J+1 - \delta_s \right]  \\
 (p_{k+1} -p_{k-1}) & \ge & \frac{1}{p_k} \cdot (p_{k+1}+p_k) \cdot \left[ J+1 - \delta_s \right]  \\
 g_2 + g_3 & \ge & \frac{1}{p_k} \cdot (2 p_k + g_3) \cdot \left[ J+1 - \delta_s \right]  \\
g_2 + g_3 & \ge & (J+1 - \delta_s) \cdot (2 + \frac{g_3}{p_k})
\end{eqnarray*}
This is the desired Equation~(\ref{EqGEnu}).
\end{proof}

The parameter $\delta_s$ is the ratio of instances of driving terms for $s$ of length $J+1$ 
to instances of $s$ itself, length $J$, in $\pgap(p_{k-1}^\#)$.

Note that in Lemma~\ref{LemE} the constraint~(\ref{EqGEnu}) holds if and only if $\eta_s$ is nondecreasing at $p_k$.  Expressed as it is in
Equation~(\ref{EqGEnu}) the constraint ties the behavior of $\eta_s$ simply and explicitly to the succession of gaps $g_2, g_3$, to the length
$J$, qualified surprisingly by the ratio $\delta$, and to the ratio $\frac{g_3}{p_k}$.

The lefthand side of the constraint~(\ref{EqGEnu})  is the span $g_2+g_3$ of the constellation $g_2 g_3$ from $p_{k-1}$ to $p_{k+1}$.  See Figure~\ref{GpkFig}.
This is the only occurrence of the gap $g_2$ in the constraint.  If this span $g_2+g_3$ is above the bound given by the righthand side of the constraint,
 then $\eta_{s,J}$ increases at $p_k$, and otherwise $\eta_{s,J}$ decreases at $p_k$.

The smallest constellations of length $2$ are $s=24$ and $s=42$, each of which has span $|s|=6$, so ${g_2+g_3 \ge 6}$.

\begin{theorem}\label{gapThm}
For every gap $g$, once $g$ occurs in $\pgap(\pml{p_0})$ for any $p_0\ge 3$, the quadratic density $\eta_g$ increases for all primes $p_k > p_1$.
\end{theorem}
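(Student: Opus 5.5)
Specialize Lemma~\ref{LemE} to gaps, i.e. constellations of length $J=1$. The lemma says $\eta_g(p_k)/\eta_g(p_{k-1}) \ge 1$ iff
$$ g_2+g_3 \;\ge\; (2-\delta_g)\left(2+\frac{g_3}{p_k}\right), $$
with $g_2=p_k-p_{k-1}$, $g_3=p_{k+1}-p_k$ and $\delta_g = n_{g,2}(\pml{p_{k-1}})/n_{g,1}(\pml{p_{k-1}})$. The plan is to verify this inequality for every $p_k>p_1$, using only crude information about consecutive primes. The hypotheses of Lemma~\ref{LemE} hold, since $g$ occurs in $\pgap(\pml{p_0})$ and so $n_{g,1}(\pml{p_0})>0$. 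By Lemma~\ref{nwLem}, $n_{g,1}(\pml{p_{k-1}})>0$ for all later stages, so $\delta_g$ is well defined. Moreover $\delta_g\ge 0$, so it is enough to prove the stronger inequality with $\delta_g=0$:
$$ g_2+g_3 \;\ge\; 4+\frac{2g_3}{p_k}. $$

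The key step is a case split on the consecutive gaps $g_2,g_3$. For $p_k>p_1\ge 5$, the primes $p_{k-1},p_k,p_{k+1}$ are odd and all at least $3$, so $g_2,g_3$ are even and at least $2$. Among three consecutive odd numbers one is divisible by $3$, so $g_2$ and $g_3$ cannot both equal $2$ once $p_{k-1}>3$. Hence $g_2+g_3\ge 6$, the span of the smallest length-two constellations $24$ and $42$ noted just before the theorem.

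It remains to show $2g_3/p_k \le g_2+g_3-4$. Write $S=g_2+g_3\ge 6$, so that $S-4\ge 2$ and indeed $S-4\ge S/3$. Then it suffices to have $2g_3/p_k\le S/3$, i.e. $6g_3\le p_k S$. Since $g_3\le S$, this holds whenever $p_k\ge 6$. The small boundary cases $p_k=5,7$ (depending on which $p_1$ is allowed, with $p_0\ge 3$) will be checked directly, e.g. $p_k=7$ gives $g_2=2$, $g_3=4$, and $6\ge 4+8/7$ holds. Equality cannot occur, because the inequality is strict once $S\ge 6$ and $p_k>6$. So the ratio is strictly greater than $1$, and $\eta_g$ increases rather than merely not decreasing.

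The main obstacle is bookkeeping rather than substance. First, the indexing has to match Lemma~\ref{LemE}: it needs $k\ge 2$ and the condition ``$p_k>p_1$'', and I must make sure $p_{k-1}>3$ so that the mod-$3$ argument applies. Second, the lemma's hypothesis $|s|<2p_1$ must hold for the gap $g$. If $g\ge 2p_1$ for the given $p_0$, I would re-base at a later prime $p_0'$ with $g<2p_1'$, which is legitimate because $g$ persists by Lemma~\ref{nwLem}. I would also remark that the proof actually discards $\delta_g$, so the positivity of driving terms only strengthens the conclusion.
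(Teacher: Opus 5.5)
This is the paper's proof. You specialize Lemma~\ref{LemE} to $J=1$, drop $\delta_g\ge 0$, and combine $g_2+g_3\ge 6$ with a bound on the $g_3/p_k$ term. The one difference is that the paper uses $g_3/p_k<1$ where you use $g_3\le g_2+g_3$ with $p_k\ge 7$. Your mod-$3$ derivation of $g_2+g_3\ge 6$ is fine, and no boundary cases arise, since $p_k>p_1\ge 5$.

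A caution on your last paragraph, which goes beyond anything the paper addresses. Re-basing at $p_0'$ only gives the conclusion for $p_k>p_1'$. Also, Lemma~\ref{nwLem} carries the same span hypothesis, so it cannot justify persistence. For gaps none of this is needed: each endpoint of an instance of $g$ is removed in exactly one of the $p_k$ copies, so $n_{g,1}(p_k^\#)\ge (p_k-2)\,n_{g,1}(p_{k-1}^\#)$ for every span, and that is all the ratio computation in Lemma~\ref{LemE} uses.
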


\begin{proof}
For a gap $g$, the length $J=1$.  The parameter $\delta_g \ge 0$, and ${0 < \frac{g_3}{p_k} < 1}$. So 
$$ g_2 + g_3 \ge 6 > (2-\delta_g)\cdot (2 + \frac{g_3}{p_k}), $$
and the inequality in (\ref{EqGEnu}) always holds.  Thus $\eta_g$ is increasing.
\end{proof}

That is, for every gap $g$, if $g$ occurs in $\pgap(\pml{p})$, the expected number of occurrences of $g$ in the interval of survival $\Delta H(p_k)$, 
proportionate to $g_3$, increases for all $p_k > p$.   Under the dynamics of the system, the quadratic density of every gap increases.

For $J \ge 2$, results along the lines of Theorem~\ref{gapThm} come with qualifications.  
For example, for constellations $s$ of length $J = 2$, the constraint~(\ref{EqGEnu})
will hold for large $p_k$ when $g_2 + g_3 \ge 8$, or equivalently except when $g_2g_3 = 24$ or $42$.

On the other hand, $g_2+g_3$ is the most volatile part of the constraint (\ref{EqGEnu}).  Its minimum value is $6$, but the value $g_2+g_3$ 
fluctuates wildly.  

The results will also be sensitive to the value of $\delta_s= \frac{n_{s,J+1}(\pml{p_k})}{n_{s,J}(\pml{p_k})}$, which depends both on $s$ and $p_k$.  We will return to an analysis of $\delta_s$
later, but for the time being we ignore it.

We define
\begin{equation} 
f(p_k, J) \; = \; \frac{g_2+g_3}{2+ g_3/p_k} - (J+1). \label{Eqf}
\end{equation}
This function $f$ is linear in $J$, and $\delta_s$ would be added to it to provide a complete equivalent to the constraint~(\ref{EqGEnu}). 

\begin{lemma}\label{fLemma}
Suppose the admissible constellation $s$ of length $J$ occurs in the cycle of gaps $\pgap(p_0^\#)$.
Then for $k \ge 2$,
\begin{eqnarray*}
 f(p_k,J)+\delta_s \; > \; 0 & {\rm iff} &  g_2 + g_3 \; > \; (J+1 - \delta) \cdot (2 + \frac{g_3}{p_k}) \\
  & {\rm iff} & \eta_s(p_k) \; > \; \eta_s(p_{k-1})
 \end{eqnarray*}
 \end{lemma}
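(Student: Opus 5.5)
The argument is a direct rewriting of Lemma~\ref{LemE}, with the non-strict inequality replaced by a strict one throughout. There are two equivalences to establish, and I will take them in order.

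For the first equivalence, put $A = 2 + g_3/p_k$. Since $g_3 > 0$ and $p_k > 0$, we have $A > 2 > 0$. By the definition~(\ref{Eqf}),
$$ f(p_k,J) + \delta_s \; = \; \frac{g_2+g_3}{A} - (J+1-\delta_s). $$
This quantity is positive iff $\frac{g_2+g_3}{A} > J+1-\delta_s$. Multiplying through by the positive number $A$ preserves the strict inequality, so this holds iff $g_2+g_3 > (J+1-\delta_s)\cdot A$, which is the middle condition. Here $\delta$ in the statement is read as $\delta_s$.

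For the second equivalence, I reuse the chain of identities in the proof of Lemma~\ref{LemE}. That chain gives
$$ \frac{\eta_s(p_k)}{\eta_s(p_{k-1})} = \frac{1}{p_k}\cdot\frac{p_{k+1}+p_k}{p_k+p_{k-1}}\cdot\left[(p_k-J-1)+\delta_s\right]. $$
It relies on the recursion $n_{s,J}(\pml{p_k}) = (p_k-J-1)\,n_{s,J}(\pml{p_{k-1}}) + n_{s,J+1}(\pml{p_{k-1}})$. That recursion is valid because $|s| < 2p_1 \le 2p_k$, so distinct fusions fall in separate images of $s$.

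To pass from the ratio to the comparison $\eta_s(p_k) > \eta_s(p_{k-1})$, I need $\eta_s(p_{k-1}) > 0$. This amounts to $n_{s,J}(\pml{p_{k-1}}) > 0$, which is also needed for $\delta_s$ to be defined. It follows from Lemma~\ref{nwLem} by induction starting from $n_{s,J}(\pml{p_0}) > 0$, since the populations are nondecreasing. The hypothesis of Lemma~\ref{nwLem} that $J < p_{k+1}-2$ is met because $s$ is admissible of small span, so $J$ is small relative to $p_1$.

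Then $\eta_s(p_k) > \eta_s(p_{k-1})$ iff the ratio exceeds $1$. I will then repeat the algebra of Lemma~\ref{LemE} with strict inequalities: subtract $1$, clear the positive denominators $p_k$ and $p_k+p_{k-1}$, and use $p_{k+1}-p_{k-1} = g_2+g_3$ and $p_{k+1}+p_k = 2p_k+g_3$. Each step multiplies by a positive quantity or adds the same term to both sides, so each step is reversible and strictness is preserved.

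I do not expect a real obstacle. The one point that needs care is confirming positivity of every factor cleared along the way, namely $p_k$, $p_k+p_{k-1}$, $A$, and $n_{s,J}(\pml{p_{k-1}})$. This is exactly what makes each implication reversible.
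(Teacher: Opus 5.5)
Your proposal is correct and follows the paper's route. The paper gives no separate proof of Lemma~\ref{fLemma}; it treats it as an immediate consequence of the definition~(\ref{Eqf}) of $f$ and the chain of reversible equivalences in the proof of Lemma~\ref{LemE}, read with strict inequalities, which is exactly what you do. Your explicit checks fill in details the paper leaves implicit: that $2+g_3/p_k>0$, that the hypothesis $|s|<2p_1$ must be imported from Lemma~\ref{LemE} to justify the recursion, and that $n_{s,J}(\pml{p_{k-1}})>0$ so that $\delta_s$ is defined and the ratio can be compared with $1$.
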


Since $\delta_s\ge 0$, when $f (p_k,J) > 0$ the constraint~(\ref{EqGEnu}) holds for {\em all} admissible 
constellations $s \subset \pgap(\pml{p_k})$ of length $J$.
When $f(p_k,J) <0$, the constraint is not met -- unless the addition of $\delta_s$ is sufficient to change the sign.  So $f(p_k,J)$ provides a surrogate for the constraint~(\ref{EqGEnu}) as $\delta_s \fto 0$.

$f$ captures the part of the constraint~(\ref{EqGEnu}) that does not vary with the specific constellation $s$, and it isolates the contribution of $s$ as 
a simple offset $+\delta_s$.  Figure~\ref{fpjFig} shows six samples of $f(p_k,J)$ over consecutive primes within the first million primes.  
For these samples we show ${1 \le J \le 16}$.
Across each row, $f(p,J)$ is linear in $J$.  

% figure for f(p,J) sample
\begin{figure}[hbt]
\centering
\includegraphics[width=5.125in]{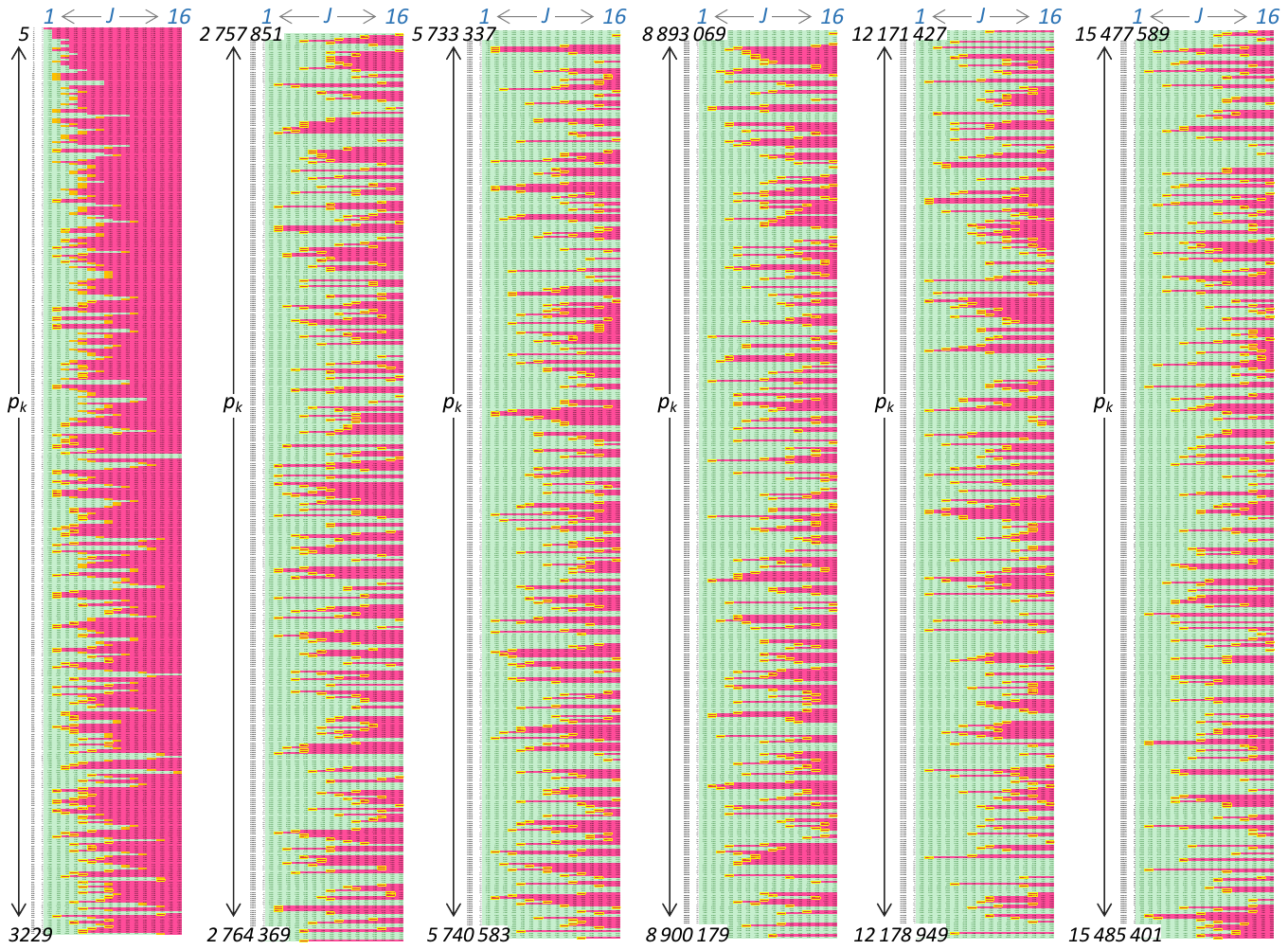}
\caption{\label{fpjFig} Six samples of $f(p_k,J) =\frac{g_2+g_3}{2+g_3 / p_k}-(J+1)$ over the first million primes.  Each strip illustrates the sign of
$f(p_k,J)$ over consecutive primes.  The green background shows where $f$ is positive
and $\eta$ is increasing for all $s$ of length $J$.  The magenta shows where $f$ is negative and $\eta$ generally decreases, 
although we still need to apply the shift by $\delta_s$.}
\end{figure}

In Figure~\ref{fpjFig} we observe the volatility of the span $g_2+g_3$.
Each row turns from green to red when
the basic constraint (ignoring $\delta$ which is specific to $s$) no longer holds for $J$.
Each strip is a sample of consecutive primes.  Within each strip, each row corresponds to a single prime $p$, and the graph
shows the sign-change of $f(p,J)$ in $J$.  Where the graph is green, the quadratic density increases for all admissible constellations
of length $J$ from this cycle $\pgap(\pml{p})$ to the next.  Where the graph turns red, there is a threshold, linear in $J$, that $\delta_s$
would have to exceed in order for the quadratic density of $s$ to increase.

Using Figure~\ref{fpjFig} we make two observations about $f(p_k,J)$.  
First, for $J \ge 2$ the constraint~(\ref{EqGEnu}) regularly fails (for small values of 
$\delta_s$).  

Our second observation from Figure~\ref{fpjFig} is that growth in the quadratic density $\eta_s$ significantly 
lags the basic survival of constellations in $\pgap(\pml{p})$.
For an admissible constellation $s$ of length $J$, once $p_k > J+1$ the population $n_{s,J}(\pml{p_k})$ generally grows by factors of ${p_k-J-1}$.
With the horizon of survival being the square $p^2_{k+1}$, over the first million primes we might expect the survival of constellations of lengths up to $J=3935$
to have stabilized.  Instead, Figure~\ref{fpjFig} shows that constellations with $J >8$ still do not have consistent growth in their quadratic density.

When $g_2+g_3$ is large, the horizon of survival $p^2_{k+1}$ advances further and more constellations are preserved.  When $g_2+g_3$ is
small, the horizon of survival does not advance as far and long constellations are at risk of further fusions.

% figure for f(p,J) sample
\begin{figure}[hbt]
\centering
\includegraphics[width=5.125in]{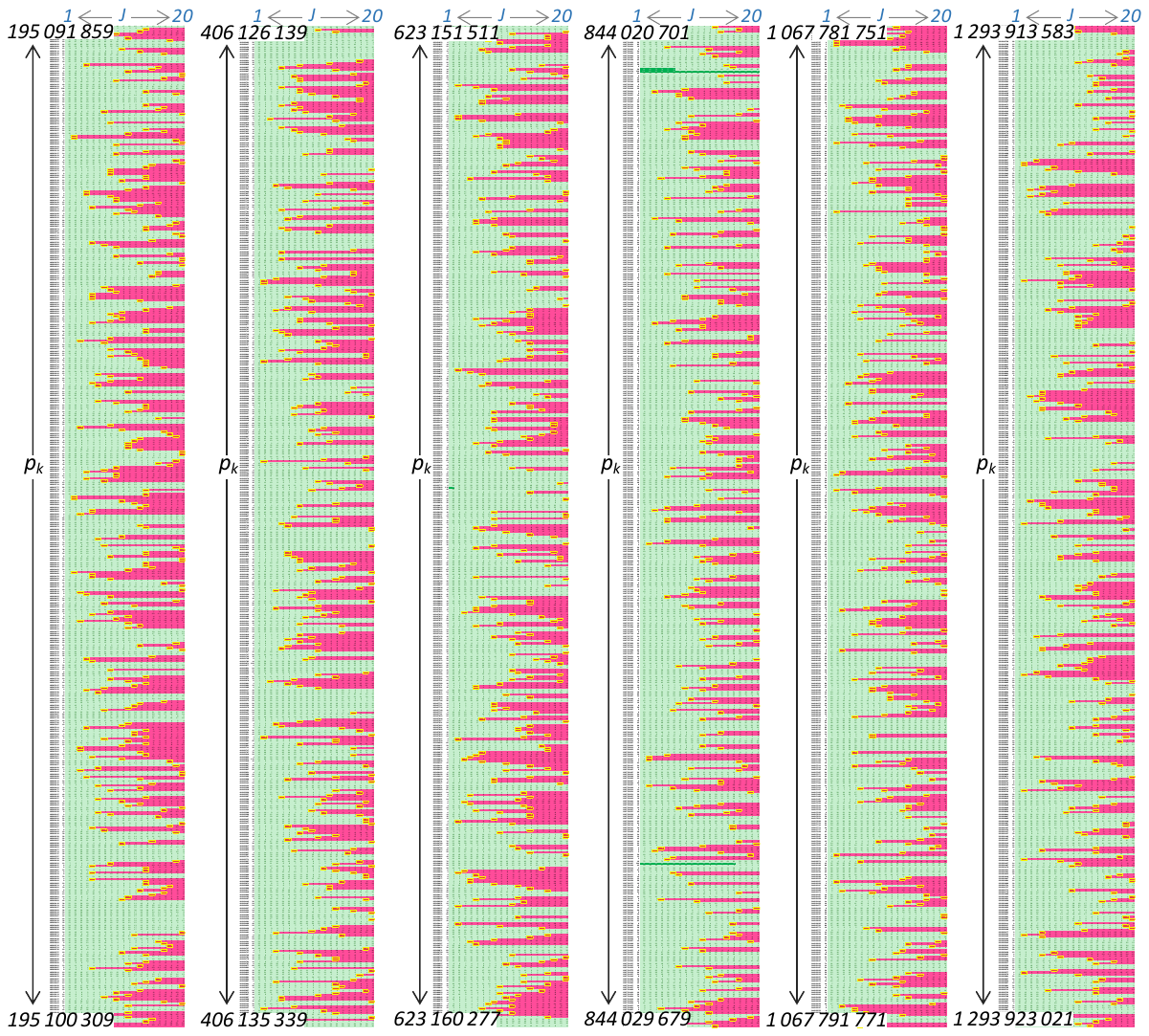}
\caption{\label{fpjBFig} Six additional samples, taken across the first $64$~million primes, of ${f(p_k,J) =\frac{g_2+g_3}{2+g_3 / p_k}-(J+1)}$.
Each strip illustrates the sign of
$f(p_k,J)$ over consecutive primes.  The green background shows where $f$ is positive
and $\eta$ is increasing for all $s$ of length $J$.  The magenta shows where $f$ is negative and $\eta$ generally decreases, 
although we still need to apply the shift by $\delta_s$.}
\end{figure}

Figures~\ref{fpjFig}~and~\ref{fpjBFig} show how the volatility of $g_2+g_3$ dominates the constraint~(\ref{EqGEnu}).
The righthand side of the constraint is much more stable.  Since the lefthand side of (\ref{EqGEnu}) is always an integer, the value of the righthand
side has no effect until it crosses an integer value.

\subsection{Regarding $g_3/p_k$.}
The second factor on the righthand side of (\ref{EqGEnu}) is $(2+ g_3/p_k)$ 
where ${g_3 = p_{k+1}-p_k}$.  
The integer value of the righthand side of (\ref{EqGEnu}) will not increase beyond $2 \cdot (J+1-\delta)$ unless 
$\frac{g_3}{p_k} \ge \frac{1}{J+1-\delta}$.

From the prime number theorem we know that
${g_3/p_k \fto 0}$, with ${g_3/p_k < \frac{1}{16597}}$ for $p_k > 2010760$ (Theorem~12 of \cite{Sch76}).  
For primes ${p_k > 2010760}$, unless we are considering 
constellations of length $J > 16597$,
the factor ${(2+ g_3/p_k)}$ does not increase the righthand side of the constraint by $1$, and thus does not change the range of $g_2+g_3$ that 
satisfy the constraint~(\ref{EqGEnu}).

\subsection{Regarding $\delta_s$.}
The ratio $\delta_s$ is the only parameter in the constraint~(\ref{EqGEnu}) that varies across the admissible constellations of length $J$.
 
 The first impact of having 
 $$ \delta_s = \frac{n_{s,J+1}(p_{k-1}^\#)}{n_{s,J}(p_{k-1}^\#)}$$
 in the constraint is to require that $n_{s,J}(p_{k-1}^\#) > 0$.  That is, the constellation $s$ itself must actually occur in
 $\pgap(p_{k-1}^\#)$.   
 
 Once $s$ occurs in the cycle, then $\delta_s$ is the ratio of driving terms for $s$ of length $J+1$ to
 the population of $s$ itself, length $J$.
 The value of $\delta_s$ depends both on $s$ and on $p_k$.  Early in the evolution of $s$, we often have 
$$ n_{s,J+1}(p_k^\#) > n_{s,J}(p_k^\#),$$
more driving terms for $s$ of length $J+1$ than instances of $s$ itself.

Figure~\ref{dgapFig} reminds us of the scale of ``early'' in the evolution of these populations.  The curves in Figure~\ref{dgapFig}
are not stacked; we colored the spaces between the curves for legibility.  The figure shows $\delta_{J=3}(\lambda)$, where the
curves aggregate the values for constellations of length $J=3$ by span, for spans
$6 \le |s| \le 82$.  As the primes $p \fto \infty$ the ratio $\delta_{J=3}(\lambda)$ does indeed go to $0$, but even for modest spans,
$\delta_s > 1$ well into the evolution across $p_k$.  

% XXXQHERE [2/14] - Evolution of delta
In Figure~\ref{dgapFig} we see that for this range of spans $|s| \le 82$ the ratio $\delta_{J=3}(\lambda)$ doesn't pass below $1$
for a long time.  For example, the span $|s|=60$ has $\delta_{|s|=60}(\lambda) > 1$ up through primes $p \sim 8.8E4$, and 
the intervals of survival $\Delta H(p)$ start at $p^2 \sim 7E9$.  Before these thresholds, the factor $(2-\delta_g)$ in the constraint 
in Equation~\ref{EqGEnu} is weakened.  The larger
result of Lemma~\ref{LemE} addresses the quadratic density of gaps between very large primes.

For a while $\delta_s$ may be large, but $\delta_s \fto 0$ as $p_k \fto \infty$.
From the eigenstructure \cite{FBHPatterns, FBHSFU} of the discrete dynamic system for $\pgap(\pml{p_k})$,
\begingroup
\renewcommand{\arraystretch}{2}
$$
\begin{array}{ll}
 \delta_s(p_k) & =  \frac{n_{s,J+1}(\pml{p_k})}{n_{s,J}(\pml{p_k})} \\
  & = 
  \frac{l_2 \cdot \lambda - 2 l_3\cdot \lambda_3 + 3 l_4 \cdot \lambda_4 - \cdots + (-1)^{J_1-J+1}(J_1-J)l_{J_1-J+1} 
  \cdot \lambda_{J_1-J+1}}{l_1 - l_2 \cdot \lambda  + l_3 \cdot \lambda_3  - \cdots + (-1)^{J_1-J}  l_{J_1-J+1} \cdot \lambda_{J_1-J+1}}
 \end{array}
 $$
 \endgroup
 where $J_1$ is the length for the longest driving term for $s$;
 and the coefficients $l_j$ and eigenvalues $\lambda_j$ are as described above.

The parameter $\lambda=\lambda_2$ decays to $0$ slowly with $p_k$.
$$1 > \lambda_2 > \lambda_3 > \cdots > \lambda_{J_1-J+1}$$
with $\lambda_k \approx \lambda_2^{k-1}$.
No matter what the initial conditions, for all $s$ we know 
that the ratio $\delta_s \ge 0$, and $\delta_s \fto 0$ slowly with $p_k$.

\begin{figure}[hbt]
\centering
\includegraphics[width=5in]{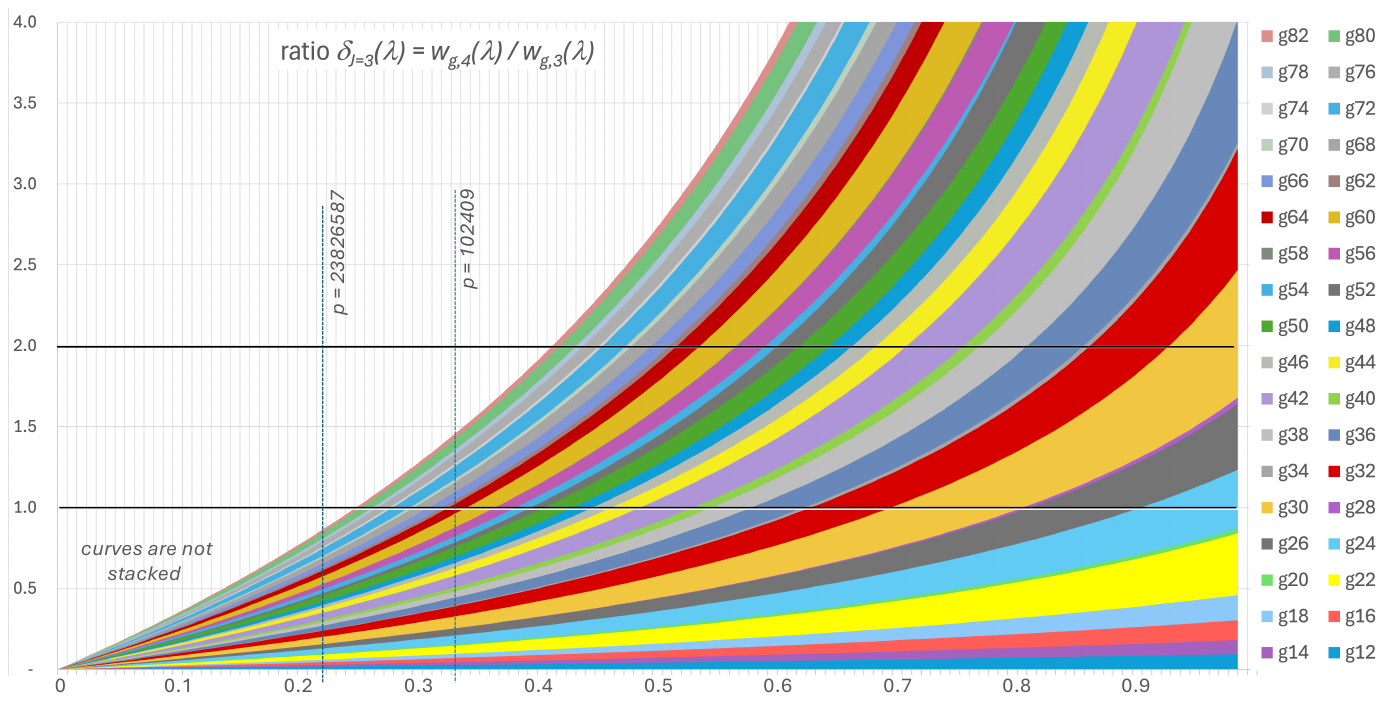}
\caption{\label{dgapFig} 
The graphs for $\delta_{J=3}(\lambda)$ for constellations of length $J=3$ and spans of $s$ in ${12\le |s| \le 82}$.  
The graphs of each $\delta_{J=3}(\lambda)$ 
are not stacked and are colored consistently with Figure~\ref{AllGapsFig}.  The counts for individual constellations are aggregated here by span. }
\end{figure}

% === fragment  XXXQHERE === fragment ====
% For $\delta_s$ the behavior is slightly more complicated.  What more can we learn about $\delta_s$?
 
% As $p$ grows, this line of symmetry becomes steeper, and for any fixed $(g,J)$ eventually $\delta_{g,J} < 1$.
% This means that for large primes Equation~(\ref{EqGEnu}) is approximated by the condition
% $$ g_2 + g_3 \ge 2(J+1). $$

% \begin{lemma}
% Let $s$ be an admissible constellation of length $J$ and span $g = |s|$.
% Then there is a prime $q$ with $J > \frac{\phi(\pml{q})}{\pml{q}} g$, and for all primes $p_k > q$, $\delta_s < 1$.
% \end{lemma}

\FloatBarrier

% == subsection:  short constellations
\subsection{Samples of short constellations.}
We have models and samples for a few short constellations of small span.
Figure~\ref{ModelsJ2Fig} shows the models $w_{s,2}(\lambda)$ for a dozen constellations of length $J=2$, the section at 
$\lambda=0.36$, and counts of occurrences among primes across corresponding $\Delta H(p)$.
We begin to see the noise that the factor $(g_2+g_3)$ introduces into survival.  The models are still good first-order estimates, but the
errors are significantly larger than those for gaps.

As a technical note, the subdominant eigenvalue $\lambda$ that we use as the system parameter varies by length $J$.
$$ \lambda(J) \; = \; \prod_{p_1}^{p_k} \frac{q-J-2}{q-J-1}$$
While the values of the various $\lambda(J)$ are close to each other, we cannot quite align the graphs of the $w_{s,J}(\lambda)$
as the length $J$ varies.

\begin{figure}[hbt]
\centering
\includegraphics[width=5.125in]{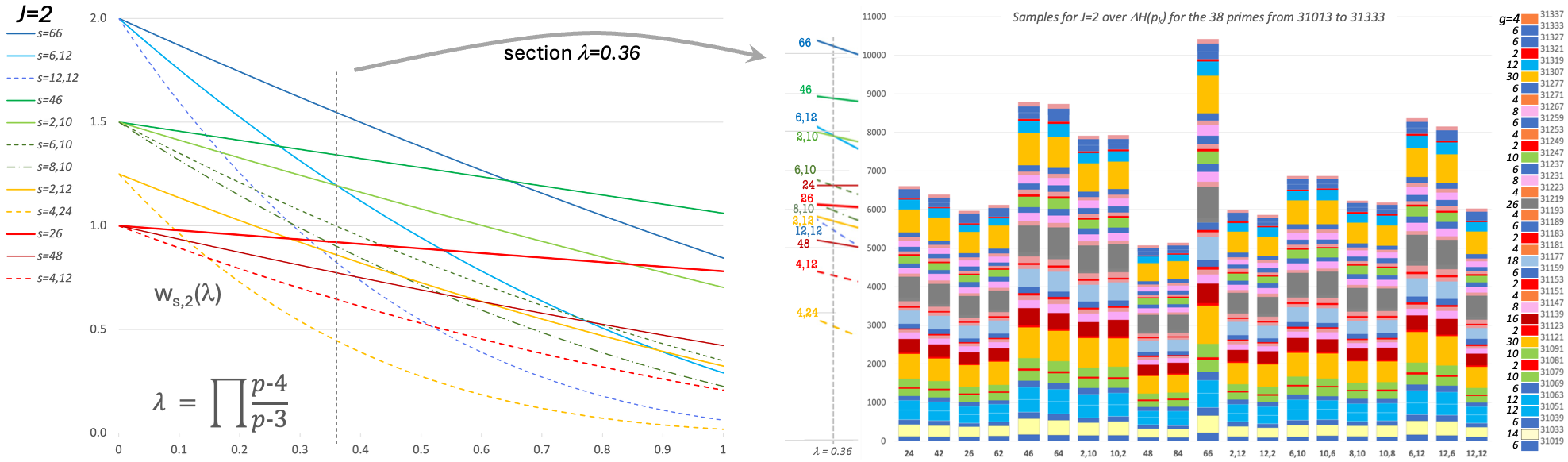}
\caption{\label{ModelsJ2Fig} 
Models of the relative populations $w_{s,2}(\pml{p})$ for a few constellations $s$ of length $J=2$ and small span.
Their asymptotic values $w_{s,2}(\infty)$ depend on whether some span between boundary fusions is divisible by $5$, $7$, or $11$.
The righthand graph compares the section at $\lambda=0.36$ with the counts of occurrences among primes in $\Delta H(p)$ for
${p\sim 31019}$.}
\end{figure}

The righthand panel in Figure~\ref{ModelsJ2Fig} 
shows the accumulated populations for a few constellations of length $J=2$, within the intervals of survival
$[p_k^2, p_{k+1}^2]$ for the $38$ primes from $p_k=31013$ to $31333$.  
For example the first bar depicts the populations of the constellation $s=24$ within these intervals.  Each interval is color-coded by
the gap $g=p_{k+1}-p_k$, consistent with Figure~\ref{DelHFig}.

\begin{figure}[hbt]
\centering
\includegraphics[width=5.125in]{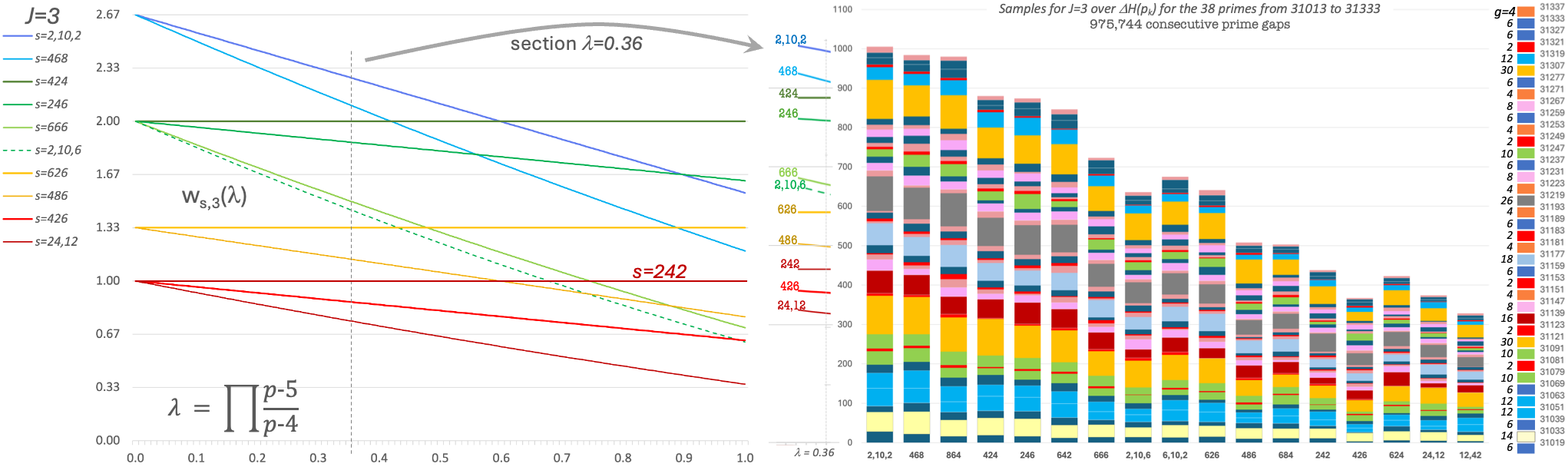}
\caption{\label{ModelsJ3Fig} 
Models of the relative populations $w_{s,3}(\pml{p})$ for a few constellations $s$ of length $J=3$ and small span.
The asymptotic values $w_{s,3}(\infty)$ for this selection of constellations depend on whether some span 
between boundary fusions is divisible by $5$ or $7$.
Actual populations of the constellations of length $J=3$ among those with the highest relative populations for 
$\lambda = 0.356$.  These samples are taken over $\Delta H(p_k)$ for $38$ consecutive primes, from $p_k=31013$ to $31333$.
The counts follow the relative population models $w_{s,3}(0.36)$, shown at left, to first order.}
\end{figure}

Similarly the righthand panel in Figure~\ref{ModelsJ3Fig} shows the accumulated populations of a sample of constellations of
length $J=3$ across $\Delta H(p)$ for $38$ consecutive primes from $p=31013$ to $31333$.  The relative population models
$w_{s,3}(\lambda)$ for these constellations are shown on the left.  There is general first-order agreement with the uniform estimate, but 
we observe larger deviations.  

Interestingly, for these samples the symmetric pairs of constellations -- e.g. $s=246$ and $s=642$, or $s=2,10,6$ and $s=6,10,2$, 
or $s=426$ and $s=624$, or $s=24,12$ and $s=12,42$ -- exhibit noticeably unequal survival, although their populations in the 
larger cycle $\pgap(p^\#)$ are exactly equal.

% XXXQHERE [2/17] - J=5 & Github.

% XXXQHERE [2/16] - relate back to figures {fpjFig} and {fpjBFig}

% ====== SECTION:  Discussion ==============================
\section{Discussion of expected survival}\label{SurvivalSection}
Gaps cannot occur among primes unless they arise in Eratosthenes sieve and survive the ongoing fusion process, passing
under the horizon of survival to be confirmed as gaps among primes.

The analysis presented above, regarding the survival of the gaps in the cycle $\pgap(\pml{p_k})$ as gaps between primes,  
relies on an hypothesis which we stated above as Conjecture~\ref{UniConj}.

Across primes $p_k$ the behavior of the expected survival of $s$
$${E_s(p_k) = n_{s,J}(\pml{p_k})\cdot g_3 \cdot (2p_k+g_3)/ \pml{p_k}}$$
is dominated by the factor ${g_3 = p_{k+1}-p_k}$.  

Over a set of several consecutive primes, the factor $g_3$ creates proportionate fluctuations in $E_s(p_k)$.
These fluctuations are apparent in the samples displayed in Figure~\ref{DelHFig}.
They appear as the intra-column variations in the accumulated populations of each gap within the intervals of survival $\Delta H(p_k)$,
which are color-coded by the size of the gap ${g_3=p_{k+1}-p_k}$.

Consequently we have defined the quadratic density $\eta_s(p_k) = E_s(p_k)/g_3$.  $\eta_s(p_k)$ is the average occurrence of $s$ in 
the $g_3$ intervals $[n^2, (n+1)^2]$ across the interval of survival $\Delta H(p_k)$.  We expect that $\eta_s(p_k)$ should
be more stable over consecutive primes.

Figure~\ref{DelHFig} shows the accumulated populations of gaps across intervals of survival $\Delta H(p)$
for ranges of consecutive primes corresponding to four distinct values of $\lambda$.
The broken line indicates the estimates from relative population models from Figure~\ref{AllGapsFig},
at the corresponding values of $\lambda$.

In our samples of gaps between primes, represented by the examples in Figure~\ref{DelHFig}, 
we see no large variations away from the uniformity predicted by Conjecture~\ref{UniHConj}, either across the column heights or
across the bands within each column.  A large deviation in the column heights would indicate that the surviving populations of gaps
strongly prefer (i.e. a column significantly higher than the estimates) or deprecate (i.e. a column significantly lower) specific gaps.  

Any large deviations among the bands of a single column, 
beyond the proportionality to $g=p_{k+1}-p_k$, would indicate that the populations of the gap corresponding to this column are
clustered non-uniformly across the intervals of survival $\Delta H(p_k)$.

To strengthen our understanding of the deviations among the bands in a single column, we have defined the quadratic
density $\eta_s(p)$.  From our samples, represented here by Figure~\ref{EtaSampleFig}, we observe mean values of $\eta_g$
consistent with Conjecture~\ref{UniHConj} and low variance.

Additional samples are available in the Jupyter notebooks on our GitHub site, and the code enables creating and exploring 
additional samples.

% == Subsection - Legendre's Conjecture =========
\subsection{Regarding Legendre's Conjecture.}
In $\pgap(\pml{p_k})$ there are gaps at least as large as $g = 2p_{k-1}$.  The maximum gap $g_{\max}(p_k^\#)$ in the cycle
$\pgap(\pml{p_k})$ defines Jacobsthal's function \cite{Hag09}.  The value of $g_{\max}(p_k^\#)$ quickly exceeds $2p_{k-1}$.
Here is a sample of values of $g_{\max}(p_k^\#)$ from \cite{Hag09}:

\begin{center}
\begin{tabular}{r|cccccccccc}\hline
$p_k$                  & $23$ & $37$ &  $67$ & $103$ & $107$  & $149$ & $151$ & $211$ & $223$  & $227$ \\ \hline
$g_{\max}(p_k^\#)$ & $40$ & $66$ & $152$ & $282$ & $300$ & $432$ & $450$ & $686$ & $718$ & $742$ \\
\end{tabular}
\end{center}

These large gaps arise but have small relative populations in $\pgap(\pml{p_k})$.  Statistically they are unlikely to appear in the interval of 
survival.  However, if a gap $g > 2p_k$ did fall within $\Delta H(p_k)$, this would challenge Legendre's conjecture.

Consider a counterexample to Legendre's conjecture.  The counterexample is an integer $n$ and a gap $g$ such that $g$ spans 
the interval $[n^2, (n+1)^2]$.  Since both squares are composite, a gap in $\Delta H(p_k)$ that spanned this interval, preventing a
prime from occurring, would have to satisfy $g \ge 2n+4$, for some $n$ in ${p_k \le n < p_{k+1}}$.

Thus gaps $g \ge 2p_k+4$ in $\pgap(p_k^\#)$ present possible challenges to Legendre's conjecture.  These large gaps would have
to occur within the interval of survival $\Delta H(p_k)$ {\em and} cover an interval $[n^2,(n+1)^2]$.  However, large gaps ${g \sim 2p_k+4}$
could fall within the interval of survival and still fail to cover the required quadratic interval, e.g. half in one such interval and half in the
next.

If still larger gaps $g \ge 4p_k+6$ occur early in the interval of survival or gaps ${g \ge 4p_{k+1}-2}$ occur at all in $\Delta H(p_k)$, then
Legendre's conjecture is false.  These gaps are big enough that their very occurrence within $\Delta H(p_k)$ requires that some
quadratic interval $[n^2,(n+1)^2]$ is covered.  

From Jacobthal's function we know that large gaps do occur in the cycles $\pgap(p^\#)$.
If Legendre's conjecture holds, then the largest gaps $g \ge 4p+6$  have to occur beyond the horizon of survival, and
if large gaps $g$ of span $g > 2p+4$ fell within the horizon of survival, they would have to be misaligned or out of phase with
the quadratic intervals.

Any occurrence of such large gaps in the interval $\Delta H(p)$ would be a remarkable statistical anomaly.
Theorem~\ref{gapThm} establishes that the expected quadratic densities of all extant gaps in $\pgap(p^\#)$ are increasing.
Figure~\ref{EtaSampleFig} illustrates how low the variances are for the samples we have calculated.

% Updated table of values - using 0.220 as illustrated.
% XXXQHERE [2/27] - Be clear about expected value E vs measured values.
% XXXQHERE
For example, by $\lambda=0.220$ or primes $p_k \sim 23,826,527$, we have
$$
\begin{array}{rrrrrrr}
g = & 2 & 4 & 6 & 8 & 10 & 12 \\
\eta_g(p_k) = & 54498 & 54534 & 102034 & 47691 & 62544 & 86148 
\end{array}
$$
That is, at these stages of the sieve,
we expect around $54498$ gaps $g=2$ in every interval $[n^2, (n+1)^2]$, $54534$ gaps $g=4$, plus around $102,034$ gaps
$g=6$, and so on.  For small gaps, the standard deviation of samples in this range is less than $0.5\%$ of the mean values.

Again let $g_3= p_{k+1}-p_k$.  If a gap covered one of the $g_3$ quadratic intervals $[n^2, (n+1)^2]$ in $\Delta H(p_k)$, the populations
of the other gaps in $\Delta H(p_k)$ would dip on average by $1/g_3$.  Possible, but a statistical outlier contrary to the trending
quadratic densities.

\vskip 0.125in

The discussion above leads us to consider two functions related to Jacobthal's function.  Jacobthal's function, often denoted $h(p_k^\#)$
is the largest gap in the complete cycle $\pgap(p_k^\#)$.  Let's define a function over intervals ${\mathcal I}$
$$ g_{\rm max}({\mathcal I}, p) = {\rm max} \set{ g \st g \in \pgap(p^\#) \cap {\mathcal I}}.$$
In this notation Jacobthal's function $h(p^\#) = g_{\rm max}([1, p^\#+1], p)$.

We are curious about the restriction of this function to the intervals of survival
$$ g_{\rm max}(\Delta H(p), p)$$
This function is related to Jacobthal's function, but it is restricted to the interval of survival.
What are the trends of the maximal gaps $g$ that occur within $\Delta H(p)$?

The second function we're curious about is a refinement of that first one. 
$$ g_{\rm max}([(p_k+j)^2, (p_k+j+1)^2], p_k)$$
where $0 \le j < g_3$.  Of course for fixed $p_k$ the max over $j$ will yield the previous function $g_{\rm max}(\Delta H(p_k), p_k)$,
but this refinement to the individual quadratic intervals  provides more information about the outliers in the populations of gaps.

\subsection{Future directions}
There are several directions in which to continue these studies.  Of course, expanding the data samples for gaps and constellations
surviving in $\Delta H(p)$ is an ongoing need.  It would be useful too to get the coefficients for the complete models for gaps
$84 \le g \le 90$ or larger.  

Here we highlight two directions for progress on the theory underlying these studies.

One direction is to improve our models for surviving Eratosthenes sieve.  We have Conjecture~\ref{UniHConj} which postulates
approximate uniformity of the distribution of gaps $g$ in $\Delta H(p)$, weighted by their relative populations $w_{g,1}(p^\#)$.
Our samples show that uniformity provides good first-order estimates for $J=1$ and $J=3$.

For $J=5$ our samples exhibit larger deviations from the uniform estimates.  Part of this deviation is due to the much smaller actual
counts $N_{s,5}(p)$ of the sample constellations within the intervals $\Delta H(p)$.  Figures~\ref{ModelsJ2Fig} and~\ref{ModelsJ3Fig}
show that across the same intervals $\Delta H(p)$ the sample populations for $J=2$ range between $5000$ and $9000$, and for 
$J=3$ the sample populations range between $375$ and $1000$.  As $J$ increases we need longer intervals perhaps to observe the
uniformity.

The larger deviations away from the simple uniform estimates are compounded by Lemma~\ref{fLemma}, the effect of which is illustrated
in the samples in Figures~\ref{fpjFig} and \ref{fpjBFig}.  In those figures, for fixed $p$ the function $f(p, J)$ is linear in $J$. 
When constellations $g_2g_3$ of small span occur between primes, the
quadratic density dips, even for constellations of modest length (say $J>4$).

We trust the Conjecture~\ref{UniHConj} for $J \le 3$.
For larger $J$ the numbers of occurrences are small and the quadratic density can fluctuate.  There is more detail around these
structures to be discovered.

\vspace{0.1in}

Another direction for future work is a study of {\em where} large gaps ${g \ge 2p_k+4}$ occur in the cycle. 
Hagedorn has produced a few values for Jacobthal's function \cite{Hag09}, and these consistently exceed the bound $2p_k$ from 
investigating Legendre's conjecture.  We propose to develop primorial coordinates \cite{FBHPatterns, FBHktuple} for 
Hagedorn's examples \cite{Hag09}, to see how the instances of large gaps are situated within the cycles of gaps $\pgap(p_k^\#)$
relative to the horizon of survival $p_{k+1}^2$.

%=======================================

\bibliographystyle{amsplain}

% \bibliography{primes}

\begin{thebibliography}{10}

\bibitem{BrentSmall}
R.P. Brent, \emph{The distribution of small gaps between successive primes}, Math. of Computation, 28(125), Jan 1974.

\bibitem{Brown}
S. Brown, \emph{Distance between consecutive elements of the multiplicative group of integers modulo $n$}, preprint, Dec 2023.

\bibitem{Hag09}
T.R. Hagedorn, \emph{Computation of Jacobsthal's function $h(n)$ for $n < 50$}, Math. of Computation, 78(266), Apr 2009, p.1073-1087.

\bibitem{FBH07}
F.B. Holt, \emph{Expected gaps between prime numbers}, arXiv:0706.0889, Jun 2007.

\bibitem{FBHSFU}
F.B. Holt \& H. Rudd, \emph{Combinatorics of the gaps between primes}, Connections in Discrete Mathematics, Simon Fraser U.,
arXiv 1510.00743, June 2015.

\bibitem{FBHPatterns}
F.B. Holt, \emph{Patterns among the Primes}, KDP, June 2022.

\bibitem{FBH2p}
F.B. Holt, \emph{Addendum: models for gaps $g=2p_1$}, arXiv:2309.16833v1, Sept 2023.

\bibitem{FBHktuple}
F.B. Holt, \emph{Eratosthenes sieve supports the $k$-tuple conjecture}, arXiv:2502.20470v3, July 2025.

\bibitem{HL}
G.H. Hardy and J.E. Littlewood, \emph{Some problems in 'partitio numerorum'
  iii: On the expression of a number as a sum of primes}, G.H. Hardy Collected
  Papers, vol.~1, Clarendon Press, 1966, pp.~561--630.
  
\bibitem{Sch76}
L. Shoenfeld, \emph{Sharper bounds for the Chebyshev functions $\theta(x)$ and $\psi(x)$}, Math. of Comp.,  30(134), Apr 1976, pp.~337-360.

\end{thebibliography}
\providecommand{\bysame}{\leavevmode\hbox to3em{\hrulefill}\thinspace}
\providecommand{\MR}{\relax\ifhmode\unskip\space\fi MR }
% \MRhref is called by the amsart/book/proc definition of \MR.
\providecommand{\MRhref}[2]{%
  \href{http://www.ams.org/mathscinet-getitem?mr=#1}{#2}
}
\providecommand{\href}[2]{#2}

\end{document}